\begin{document}

\numberwithin{equation}{section}
\newtheorem{prop}{Proposition}[section]
\newtheorem{exa}{Example}[section]
\newtheorem{theo}{Theorem}[section]
\newtheorem{rem}{Remark}[section]
\newtheorem{lem}{Lemma}[section]
\newtheorem{defi}{Definition}[section]
\newtheorem{coro}{Corollary}[section]
\newtheorem{prob}{Problem}[section]
\newtheorem{nota}{Notation}[section]
\newtheorem{cond}{Conditions}[section]
\newtheorem{pot}{Proof of Theorem}[section]
\def\proj{{\Bbb P}^2} 
\def\R{{\Bbb R}}
\def\Z{{\Bbb Z}}
\def\C{{\Bbb C}}
\def\Q{{\Bbb Q}}
\newcommand{\ffc}[1]{{{\cal F}}(#1)}
\newcommand{\wc}[1]{{{\cal M}}(#1)}
\newcommand{\ps}[1]{{{\Bbb P}}^{#1}}
\def\F{{\cal F}}
\def\sM{{\sf M}}
\def\P{{\cal P}}
\def\Pt{\C[t]}
\def\cT{{\cal T}}
\def\RR{{\cal R}} 
\def\li{{\Bbb L}}
\def\L{{\cal L}}

\def\mC{\mathcal {C}}
\def\mF{\mathcal {F}}
\def\mM{\mathcal {M}}
\def\mR{\mathcal {R}}
\newcommand{\cfc}[1]{\{\delta_t\}_{t\in #1}}
\begin{center}
{\LARGE\bf Brieskorn module and  Center conditions: pull-back of differential equations in projective space
\footnote{
Keywords: Holomorphic foliations, Picard-Lefschetz theory
\\
AMS Classification: 32L30, 14D05. This work is partially supported by T\"UB\.{I}TAK project 116F130 "Period integrals associated to algebraic varieties.  "
}

\vspace{.25in} {\large {\sc Yadollah Zare\  ,\ Susumu Tanab\'e}}
\\}

\end{center}
\begin{abstract}
The moduli space of algebraic foliations on $\mathbb{P}^2$ of a fixed degree and with a center
singularity has many irreducible components. We find a basis of the Brieskorn module defined for a  rational function and 
prove that pull-back foliations forms an irreducible component of the moduli space. The main tools are Picard-Lefschetz theory of a rational function
in two variables,  period integrals and Brieskorn module. 
 \end{abstract}
\setcounter{section}{-1}
\section{Introduction}\label{intro}
A holomorphic foliation $\mathcal{F}(\alpha)$ in $\mathbb{P}^2$ is defined by a 
1-form $\alpha=AdX+BdY+CdZ$, where $A,B$ and $C$ are homogeneous polynomials of degree $d+1$ satisfying the identity $\alpha(\upsilon)=XA+YB+ZC = 0$ for $\upsilon= X\frac{\partial}{\partial X}+Y\frac{\partial}{\partial Y}+Z\frac{\partial}{\partial Z}$, the Euler vector field.
 For  generic $A,B$ and $C$  the degree of the foliation $\mathcal{F}(\alpha)$ is defined as  $d = deg(A)-1$ (see \cite{A.L} and \cite{CL}). 
The space of algebraic foliations
\[
\mathcal{F}(\alpha)\ \ ,\ \ \  \alpha\in \Omega_{d+1}^1 ,
\]   
where 
\begin{equation}
\Omega_{d+1}^1:=\{\alpha=AdX+BdY+CdZ\ | \ A,B,C\in \mathbb{C}[X,Y,Z]_{ d+1}^h,\ \alpha(\upsilon)\equiv 0\},
\label{Omegad+1}
\end{equation}
is the projectivization of the vector space $\Omega_{d+1}^1$ and is denoted by $\mathcal{F}(2,d)$. From here on we use the notation $\mathbb{C}[x,y,z]_{\sf D_0}^h$ to denote the ring of homogeneous polynomials of degree $\sf D_0.$ The space $\mathcal{F}(2,d)$ is a rational variety $\mathbb{C}^N$ for some $N$, the space of coefficients of  polynomials  $A,B,C$. A \emph{singularity}
of $\mathcal{F}(\alpha)$ is a common zero of $A,B$ and $C$. We denote the set of singularities of $\mathcal{F}(\alpha)$  by $Sing(\mathcal{F}(\alpha))$. For an isolated singularity $p\in Sing(\mathcal{F}(\alpha))$, if there is a holomorphic coordinate system $(\tilde{x},\tilde{y})$ in a neighborhood of the point $p$ with $\tilde{x}(p)=0$, $\tilde{y}(p)=0$ such that in this coordinate system
$$
\alpha|\wedge d(\tilde{x}^2+\tilde{y}^2)=0,
$$ 
holds then the point $p$ is called a \emph{center singularity}. 
The closure of the set of algebraic foliations of fixed degree $d$ with at least one center in $\mathcal{F}(2,d)$, which is denoted by $\mathcal{M}(2,d)$, is an algebraic subset of $\mathcal{F}(2,d)$ (see for instance, \cite{Mo1} and \cite{A.L}). Identifying irreducible components of $\mathcal{M}(2,d)$ is the center condition problem in the context of polynomial differential equations on the real plane.
The complete classification of irreducible components of $\mathcal{M}(2,2)$ is done by 
H. Dulac in \cite{Du} (see also \cite[p.601]{CL}). For the classification of polynomial 1-forms of degree 3 the reader can consult \.{Z}o{\l}{\c{a}}dek's articles \cite{Z-1}, \cite{Z-2}. This classification gives applications on the number of limit cycles in the context of polynomial differential equations on the real plane.

Consider the morphism  
\begin{equation} 
\begin{array}{ll}
\ \  \ \ \  F:\mathbb{P}^2\rightarrow\mathbb{P}^2\\
{[x,y,z]}\rightarrow{[R,S,T]}
\end{array} 
\label{F}
\end{equation}
where $R,S,T \in \mathbb{C}[x,y,z]_{s}^h.$\\
Let  $\mathcal{P}(2,a,s)$ be the set of foliations
\begin{equation}\label{Falpha}
\mathcal{F}(F^*(\alpha)) \ \ \   where \ \ \  \alpha\in\Omega_{a+1}^1 \ . 
\end{equation}

Let us denote by $D$ the zero locus of the determinant of Jacobian matrix $J_F$ of $F$ i.e. 
\begin{equation}
 J_F (x,y,z)= 
\left(\begin{array}{lll}
R_x&R_y&R_z\\
S_x&S_y&S_z\\
T_x&T_y&T_z
\end{array}\right)
\label{JF}
\end{equation}

\begin{equation}
D=\{  [x:y:z]|  det(J_F (x,y,z)) =0\} \subset \mathbb{P}^2.
\label{D}
\end{equation}

\begin{defi}
For a generic morphism $F$ and foliation $\mathcal{F}$, there exists local chart $(\phi,U)$ (resp. $(\psi,V)$)   of the critical point $p\in D$ (resp. $F(p)$)   such that $F|U=(x^2,y)$ and the leaves of $\mathcal{F}|(\psi,V)$ are given by $X+Y^2=t$. Therefore, $F^{\ast}(\mathcal{F})$ with local expression $x^2+y^2=t$ has a center singularity. We call this singularity of the foliation $F^{\ast}(\mathcal{F})$ \emph{tangency center singularity}.
\label{TCS}
\end{defi}
We define the space $\mathcal{P}(2,a,s)$ of pull-back foliations $\mathcal{F}(\omega)$ defined by
\begin{equation}
  \omega=F^{\ast}(A)dR+F^{\ast}(B)dS+F^{\ast}(C)dT,
\label{AdRBdSCdT}
\end{equation}
where
\[
R,S,T\in\mathbb{C}[x,y,z]_{ s}^h \ , \ A,B,C \in \mathbb{C}[X,Y,Z]_{ a+1}^h
\ ,
\]
\[ RF^{\ast}(A)+SF^{\ast}(B)+T F^{\ast}(C) = 0.\]

\begin{theo}\label{1:28}
The space $\mathcal{P}(2,a,s)$ constitutes an irreducible component of $\mathcal{M}(2, d )$ 
 for $d=  s(a+2)-2,  s \geq 2.$
\end{theo}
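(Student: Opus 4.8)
By construction $\mathcal P(2,a,s)$ is the Zariski closure of the image of the irreducible affine variety
$$
\mathcal V=\Bigl\{(R,S,T,A,B,C)\ :\ R,S,T\in\mathbb C[x,y,z]^h_{s},\ A,B,C\in\mathbb C[X,Y,Z]^h_{a+1},\ XA+YB+ZC\equiv 0\Bigr\}
$$
under the polynomial map $\Phi(R,S,T,A,B,C)=F^{*}(A)\,dR+F^{*}(B)\,dS+F^{*}(C)\,dT\in\Omega^1_{d+1}$, so $\mathcal P(2,a,s)$ is irreducible; its dimension equals $\dim\mathcal V$ minus the dimension of a generic fibre of $\Phi$, the latter accounting for the composition of $F$ with $\mathrm{Aut}(\mathbb P^2)$ and for rescalings. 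Since $\mathcal P(2,a,s)\subseteq\mathcal M(2,d)$, it suffices to exhibit one $\mathcal F_0=\mathcal F(\omega_0)\in\mathcal P(2,a,s)$, generic, with $T_{\mathcal F_0}\mathcal M(2,d)\subseteq T_{\mathcal F_0}\mathcal P(2,a,s)$: then the two tangent spaces coincide, and choosing $\mathcal F_0$ in the smooth locus of $\mathcal P(2,a,s)$ gives $\dim_{\mathcal F_0}\mathcal M(2,d)=\dim\mathcal P(2,a,s)$; as $\mathcal P(2,a,s)$ is irreducible it lies in a unique component $W$ of $\mathcal M(2,d)$, and $\dim\mathcal P(2,a,s)\le\dim W\le\dim_{\mathcal F_0}\mathcal M(2,d)=\dim\mathcal P(2,a,s)$ forces $W=\mathcal P(2,a,s)$.

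\textbf{Linearising the center condition.}
Choose $F$ and $\alpha_0$ generic and set $\omega_0=F^{*}(\alpha_0)$, so that the critical curve $D$ is smooth and the tangency center $p\in D$ of Definition \ref{TCS} is a nondegenerate center. Near $p$ the foliation $\mathcal F_0$ admits a holomorphic first integral $h$ with an isolated Morse critical point at $p$, whose level curves $h^{-1}(t)$ carry the vanishing cycle $\delta_t$ that collapses to $p$ as $t\to 0$; globally this cycle lies in a fibre of the rational function $f$ whose Brieskorn module we have computed. For generic $\mathcal F_0$ an open neighbourhood of $\mathcal F_0$ in $\mathcal M(2,d)$ consists of foliations still carrying a (nearby, nondegenerate) center, so every $\omega_1\in T_{\mathcal F_0}\mathcal M(2,d)$ is the velocity of a deformation $\omega_\epsilon=\omega_0+\epsilon\,\omega_1+O(\epsilon^2)$ preserving a center; the first-order obstruction to this is the identical vanishing of the relative period (the first Melnikov integral)
$$
M(t)=\int_{\delta_t}\eta_1\equiv 0 ,
$$
where $\eta_1$ is obtained from $\omega_1$ by the Gelfand--Leray construction along $\omega_0$. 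Hence $T_{\mathcal F_0}\mathcal M(2,d)\subseteq\mathcal K:=\{\,\omega_1\ :\ \textstyle\int_{\delta_t}\eta_1\equiv 0\,\}$, the reverse inclusion $T_{\mathcal F_0}\mathcal P(2,a,s)\subseteq\mathcal K$ being automatic from $\mathcal P(2,a,s)\subseteq\mathcal M(2,d)$.

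\textbf{From vanishing periods to pull-back deformations.}
It remains to prove $\mathcal K\subseteq T_{\mathcal F_0}\mathcal P(2,a,s)$. The identity $M(t)\equiv 0$ persists under analytic continuation of $\delta_t$ along loops in the $t$-line, so $\int_{h_{*}\delta_t}\eta_1\equiv 0$ for every monodromy operator $h$ of $f$; by the Picard--Lefschetz formula the orbit of $\delta_t$ generates the vanishing homology of the generic fibre of $f$, whence $\int_\gamma\eta_1\equiv 0$ for every cycle $\gamma$ in that fibre. Thus $\eta_1$ represents the zero class in the Brieskorn module of $f$ (after localising at the relevant factor), and our explicit basis of that module shows that such an $\eta_1$ — equivalently such an $\omega_1$ — is a $\mathbb C$-linear combination of the forms
$$
F^{*}(\dot A)\,dR+F^{*}(\dot B)\,dS+F^{*}(\dot C)\,dT+F^{*}(A)\,d\dot R+F^{*}(B)\,d\dot S+F^{*}(C)\,d\dot T
$$
arising by differentiating $\Phi$; that is, $\omega_1\in\mathrm{Im}\,d\Phi=T_{\mathcal F_0}\mathcal P(2,a,s)$, which completes the proof of Theorem \ref{1:28}.

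\textbf{The main difficulty.}
The crux is the last step, and it splits into two parts. First one must verify, for a generic morphism $F$ — this is where the hypothesis $s\ge 2$ enters — that the tangency vanishing cycle $\delta_t$ is nonzero in homology and that its Picard--Lefschetz orbit exhausts the vanishing homology of $f$, i.e. that the global monodromy of the fibration of $f$ is sufficiently large; this depends on the structure of $f$ established earlier. Second, one must pass from ``$[\eta_1]=0$ in the Brieskorn module'' to the algebraic conclusion ``$\omega_1\in\mathrm{Im}\,d\Phi$'', which is precisely where the explicit basis is used and where the numerical identity relating $\dim\mathcal K$, the rank of the Brieskorn module and $\dim\mathcal P(2,a,s)$ is checked. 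The persistence of the center and the identification of the first-order center condition with $\int_{\delta_t}\eta_1\equiv 0$ are classical (cf. \cite{Mo1}) but must be recorded in the present tangency-center setting.
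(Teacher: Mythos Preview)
Your outline follows the paper's overall architecture (pick a base point, linearize via the first Melnikov integral, use monodromy to propagate the vanishing, land in the tangent space of $\mathcal P(2,a,s)$), but it breaks at the two places you yourself flag as ``the crux'', and the breaks are not merely missing details.

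\textbf{The base point cannot be generic.} You take $\alpha_0\in\Omega^1_{a+1}$ generic; such an $\alpha_0$ has no rational first integral, so there is no global rational function $f$, no global fibration on the target $\mathbb P^2$, and hence no global Brieskorn module and no global monodromy to invoke. The local first integral $h$ near the tangency center $p$ only gives you the \emph{local} vanishing cycle $\delta_t$; analytic continuation of $\int_{\delta_t}\eta_1$ in $t$ is then only along the local discriminant, which is far too little. The paper deliberately places $\mathcal F_0$ at the \emph{special} point $F^*(\alpha_0)$ with $\alpha_0=qQ\,dP-pP\,dQ$, so that $F^*(f)=F^*(P)^q/F^*(Q)^p$ is a global rational first integral and one can speak of the Brieskorn modules $H_f$ and $H_{F^*(f)}$ and of the monodromy representations $\sM_f$, $\sM_{F^*(f)}$. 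For the same reason the paper works with tangent \emph{cones} rather than tangent spaces at this non-generic point.

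\textbf{The monodromy orbit of the tangency cycle does not exhaust the fibre homology.} Even with the logarithmic $\alpha_0$, your key assertion ``the orbit of $\delta_t$ generates the vanishing homology of the generic fibre of $f$'' is false. The tangency cycle $\delta_t$ lives in a fibre of $F^*(f)$, and $F_*(\delta_t)=0$. What the paper proves (Theorem~\ref{thmFast}) is that the $\sM_{F^*(f)}$-orbit of $\delta_t$ generates exactly $\ker\bigl(F_*:H_1((F^*f)^{-1}(b),\Z)\to H_1(f^{-1}(b),\Z)\bigr)$, a proper subgroup with quotient isomorphic to $H_1(f^{-1}(b),\Z)$. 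Consequently $M_1\equiv 0$ only yields $\int_\gamma \omega_1/F^*(PQ)=0$ for $\gamma\in\ker F_*$, \emph{not} for all cycles. The residual part descends to a well-defined functional on $H_1(f^{-1}(b),\Z)$, represented by some $\alpha\in H_f$; bounding $\deg\alpha\le a+1$ and then concluding $\omega_1-F^*(\alpha)$ is relatively exact is exactly where the comparison $F^*:H_f\hookrightarrow H_{F^*(f)}$ (Corollary~\ref{Hfinj}) and the degree estimates of Theorem~\ref{14:06} enter. Your sketch collapses this two-step decomposition $\omega_1=F^*(\alpha)+\omega_e$ into the single claim ``$[\eta_1]=0$ in the Brieskorn module'', which would force $\alpha=0$ and would miss precisely the tangent directions coming from deforming $\alpha_0$ itself.
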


This article treats the question of the vector tangent to $\mathcal{M}(2, d)$  at  a foliation with rational first integral
$f.$ This question has been studied for the case of  pull-back of polynomial differential equations in \cite{YZ}.

A  sketch of contents of the  article is as follows.
 In section \ref{pullbackprojective} we prepare notations and notions that will be used in the course of exposition. 
Since the calculation of the tangent space of $\mathcal{M}(2,d)$ at its generic point requires more information, we adopt a strategy to choose a special point $\mathcal{F}_0$ in the intersection of $\mathcal{P}(2,a,s)$  with the space of projective logarithmic foliations \eqref{IPQ}.  Let $\mathcal{F}_{\epsilon }$ be a deformation of $\mathcal{F}_0$ such that for any $\epsilon$, $\mathcal{F}_\epsilon$ has a center singularity closed to a tangency center singularity of $\mathcal{F}_0$. We  assume that $\mathcal{F}_0$ (resp. $\mathcal{F}_{\epsilon}$ ) is defined by $F^*(\alpha_0)$ (resp. $\omega_\epsilon=F^*(\alpha_0)+\epsilon^{1} \omega_1+\cdots $) for $\alpha_0$ \eqref{alpha0}. 
By using the results concerning vanishing cycles from section \ref{PL}  and relatively exact 1-forms from section \ref{REx}, we establish that there are 1-forms $\alpha$ and $ \omega_e$ such that $\omega_1$ be expressed as $\omega_1=F^*(\alpha)+ \omega_e$.
In section \ref{TC} by using the first order Melnikov function and period integral we calculate an explicit form of the tangent vector to the irreducible component ${\mathcal X}(a,s)$ of $\mathcal{M}(2,d)$ containing $\mathcal{P}(2,a,s)$. This calculation gives a proof of Theorem  \ref{8:34:21}.
In section \ref{PMBM} we find a basis for the Brieskon module $H_f$ with  $f=\frac{P^q}{Q^p}$ for $P,Q$ generic polynomials and $p,q$ co-prime. This is used to prove that  $F^{\ast}(B)$, for $B$ a basis of $H_f$, is extendable to a basis of $H_{F^\ast( f)}$.
This result furnishes us with the proof of Theorem
\ref{14:06} that is necessary to establish Corollary \ref{Hfinj} that in its turn  plays an essential r\^ole during the proof of  the key Lemma  \ref{19:03}.

Authors are grateful to Hossein Movasati for useful remarks.
\section{Pull-back of projective foliations}\label{pullbackprojective}

Let $F$ be the morphism defined in \eqref{F}. 
If the coefficients of the pull-back $F^*(\alpha)$ of $\alpha=AdX+BdY+CdZ$ for $A,B,C\in\mathbb{C}[X,Y,Z]_{a+1}^h$  do not have a common factor then the degree of the foliation $F^*(\mathcal{F}(\alpha))$ is $s(a+2)-2$.  

By taking the coefficient of polynomials as coordinates of the map from the space of polynomials to projective space we can see that $\mathcal{P}(2,a,s)$ is an irreducible algebraic subset of $\mathcal{M}(2,d)$. We take a point $\mF(F^*(\alpha))$ in 
$\mathcal{P}(2,a,s)$ for a 1-form $\alpha$ like in \eqref{Omegad+1},  make a deformation $\mathcal{F}_{\epsilon}\in\mathcal{P}(2,a,s)$
and calculate the tangent vector space of $\mathcal{P}(2,a,s)$ at $\mF(F^*(\alpha))$: 
\begin{equation}\label{10:16}
\begin{split}
F_{\epsilon}^*(\alpha_{\epsilon})&=F_{\epsilon}^*(\alpha+\epsilon\alpha_1)+O(\epsilon^2)
\end{split}
\end{equation}
for $\alpha_\epsilon = \alpha + \sum_{j \geq 1} \epsilon^j \alpha_j$ with $\alpha_j \in \Omega^1_{\leq d+1}, \forall j \geq 1.$
The pull-back by a morphism $F_\epsilon= F+ \epsilon F_1 + O(\epsilon^2)$ with $F=(R,S,T)$, $F_1= ( R_1,S_1 , T_1)$ of the  form \eqref{10:16} 
\begin{equation}\label{11:03:13}
F_{\epsilon}^*(\alpha+\epsilon\alpha_1+\cdots)=F^*(\alpha)+\epsilon  \omega_W+O(\epsilon^2).
\end{equation}
Here the 1-form $\omega_W$ has the following form:
\begin{equation}\label{16:48}
\begin{split}
\omega_W&=F^\ast(A)dR_1+F^\ast(B)dS_1+F^\ast(C)dT_1\\
&+\left(R_1.F^\ast(A_X)+S_1.F^\ast(A_Y)+T_1.F^\ast(A_Z)\right)dR\\
&+\left(R_1.F^\ast(B_X)+S_1.F^\ast(B_Y)+T_1.F^\ast(B_Z)\right)dS\\
&+\left(R_1.F^\ast(C_X)+S_1.F^\ast(C_Y)+T_1.F^\ast(C_Z)\right)dT+F^{*}(\alpha_1).
\end{split}
\end{equation}

 Consider a foliation in $\mathbb{P}^{2} $ with the first integral 
\begin{equation}\label{PQfirstintegral}
\begin{split}
f:\mathbb{P}^2&\setminus(\{P=0\}\cap\{Q=0\})\rightarrow\bar{\mathbb{C}}\\
f(X,Y,Z)&=\frac{P(X,Y,Z)^q}{Q(X,Y,Z)^p}
\end{split}
\end{equation}
with $\frac{deg(P)}{deg(Q)}=\frac{p}{q}$, $g.c.d(p,q)=1.$ 
We denote by 
\begin{equation}\label{IPQ}
\mathcal{I}(deg(P)-1, deg(Q)-1)
\end{equation}
the space of projective foliations $\mathcal{F} (\alpha_0)$ with the first integrals of type (\ref{PQfirstintegral}) where
\begin{equation}\label{alpha0}
\alpha_0= qQdP-pPdQ.
\end{equation}
In \cite[Theorem 5.1]{Mo1} it is shown that the space of foliations with a first integral of type (\ref{PQfirstintegral})
is an irreducible component of $\mathcal{M}(2, deg(P)+deg(Q)-2)$ for $deg(P)+deg(Q)-2\geq 2$. This is a generalization of the result \cite{Ily} established for the polynomial first integral  to the case of foliations in  $\mathbb{P}^2$ with a generic rational first integral of type  (\ref{PQfirstintegral}).

 \begin{rem}\label{hM}
By calculating the tangent vector $\omega_W$ to $\mathcal{P}(2,a,n)$ at the point $\mathcal{F}_0=F^{*}(\alpha_0)$  for $\alpha_0$ \eqref{alpha0}, we have $A=qQP_X-pPQ_X$, $B=qQP_Y-pPQ_Y$, $C=qQP_Z-pPQ_Z$ in \eqref{16:48} and obtain
\begin{equation}\label{omegaW}
\omega_W=\omega_{pl}+F^{*}(\alpha_1),
\end{equation}
where
\begin{equation*}
\begin{split}
\omega_{pl}=qF^{*}(Q)dP_1&-pP_1dF^{*}(Q)- qQ_1dF^{*}(P)+pF^{*}(P)dQ_1,\\
P_1&=R_1F^{*}(P_X)+S_1F^{*}(P_Y)+T_1F^{*}(P_Z),\\ Q_1&=R_1F^{*}(Q_X)+S_1F^{*}(Q_Y)+T_1F^{*}(Q_Z).
\end{split}
\end{equation*}

See the calculation of \eqref{3:32}. 
\end{rem}

Let $\mathcal{X}(a,s)$ be the irreducible component of $\mathcal{M}(2,d)$ containing 
$\mathcal{P}(2,a,s)$. To calculate tangent cone of $\mathcal{X}(a,s)$ at a special point in  $ \mathcal{I}(ms-1,ns-1)\cap\mathcal{P}(2,a,s)$. 
Let $F$ be a generic morphism of $\mathbb{P}^2$ into itself such that each component of $F$ and  $f$ be a rational function $f=\frac{P^q}{Q^p}$ where $P,Q$ are two homogeneous polynomials degrees $m,n$ with 
\begin{equation}\label{nma}
a=n+m-2
\end{equation}  i.e. $mq=np$ with the condition \eqref{cond}.
\begin{theo}\label{8:34:21}
Tangent cone of $\mathcal{X}(a,s)$ at the point $\mathcal{F}_0:=\mathcal{F}(F^{*}(\alpha_0))$  for $\alpha_0$ \eqref{alpha0}
 is equal to tangent cone of $\mathcal{P}(2,a,s)$ at this point
\[TC_{\mathcal{F}_0}\mathcal{X}(a,s)=TC_{\mathcal{F}_0}\mathcal{P}(2,a,s).\] 
\end{theo}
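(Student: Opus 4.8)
## Proof proposal for Theorem 0.5

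\textbf{Overall strategy.} The plan is to show the reverse inclusion $TC_{\mathcal{F}_0}\mathcal{X}(a,s)\subseteq TC_{\mathcal{F}_0}\mathcal{P}(2,a,s)$, since the inclusion $\mathcal{P}(2,a,s)\subseteq\mathcal{X}(a,s)$ is automatic from the definition of $\mathcal{X}(a,s)$ as an irreducible component containing $\mathcal{P}(2,a,s)$. Concretely, I take an arbitrary analytic arc $\mathcal{F}_\epsilon=\mathcal{F}(\omega_\epsilon)$ in $\mathcal{X}(a,s)$ with $\mathcal{F}_0=\mathcal{F}(F^*(\alpha_0))$, write $\omega_\epsilon=F^*(\alpha_0)+\epsilon\,\omega_1+O(\epsilon^2)$, and show that the leading deformation term $\omega_1$ can be written in the form predicted by Remark 0.3, i.e. $\omega_1=\omega_W$ for some choice of $F_1=(R_1,S_1,T_1)$ and $\alpha_1\in\Omega^1_{\leq d+1}$; equivalently (after absorbing the logarithmic-deformation part) that $\omega_1=F^*(\alpha)+\omega_e$ where $\alpha$ is a projective $1$-form and $\omega_e$ is a $1$-form that is "relatively exact" with respect to the fibration of $F^*(f)$, so that it contributes no new tangent direction. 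The key point is that $\mathcal{P}(2,a,s)$ meets $\mathcal{I}(ms-1,ns-1)$, the logarithmic component for $F^*(f)=\frac{F^*(P)^q}{F^*(Q)^p}$, precisely at $\mathcal{F}_0$, and near such a point the tangent space is controlled by period integrals.

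\textbf{Key steps, in order.} First I would set up the deformation: since $\mathcal{F}_\epsilon\in\mathcal{M}(2,d)$ has a center singularity, by the Melnikov-function criterion the first-order term $\omega_1$ must have vanishing integral along the vanishing cycles $\delta_t$ of the first integral $F^*(f)$ of $\mathcal{F}_0$, i.e. $\int_{\delta_t}\frac{\omega_1}{dF^*(f)}\equiv 0$ (this is exactly the "tangent to $\mathcal{M}$" condition, already invoked for the polynomial case in the cited work \cite{YZ}, \cite{Ily}). Second, I would invoke the Brieskorn-module results from section \ref{PMBM}: the basis of $H_f$ for $f=\frac{P^q}{Q^p}$, pulled back, extends to a basis of $H_{F^*(f)}$ (Theorem \ref{14:06}), and Corollary \ref{Hfinj} gives injectivity of the relevant period map $H_f\to$ (functions of $t$). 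Third — and this is the crux — I would apply the key Lemma \ref{19:03}: it should say that a $1$-form $\omega_1$ whose periods along the vanishing cycles of $F^*(f)$ all vanish must decompose as $\omega_1=F^*(\alpha)+\omega_e$ with $\omega_e$ relatively exact (section \ref{REx}), using Picard–Lefschetz theory (section \ref{PL}) to propagate the vanishing-period condition across all cycles and the injectivity corollary to pull the information back from $H_{F^*(f)}$ to $H_f$. Finally, I would identify the "good" part $F^*(\alpha)$ with a genuine tangent vector of $\mathcal{P}(2,a,s)$ via the explicit formula \eqref{16:48}–\eqref{omegaW} (choosing $\alpha_1$ and the $F_1$-components appropriately), and note that $\omega_e$, being relatively exact, defines a trivial deformation of the foliation, hence contributes nothing to the tangent cone. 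This shows every tangent arc to $\mathcal{X}(a,s)$ at $\mathcal{F}_0$ is tangent to $\mathcal{P}(2,a,s)$, giving the reverse inclusion.

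\textbf{Main obstacle.} The hardest step is the decomposition $\omega_1=F^*(\alpha)+\omega_e$, i.e. Lemma \ref{19:03} together with the surrounding structure. The difficulty is twofold: (a) one must control \emph{all} the vanishing cycles of $F^*(f)$, not just those of $f$, and the fibration of $F^*(f)$ has extra vanishing cycles coming from the critical locus $D$ of $F$ (the tangency-center cycles of Definition \ref{TCS}) — Picard–Lefschetz theory around these must be shown not to produce obstructions; (b) one must verify that the period conditions actually force membership in the image of $F^*$ on the level of $1$-forms modulo relatively exact forms, which is where the explicit basis of the Brieskorn module and the injectivity Corollary \ref{Hfinj} do the real work. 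I expect the argument to require a careful dimension count comparing $\dim H_{F^*(f)}$, $\dim H_f$, and the number of independent tangency-center cycles, plus a verification that the map $F^*$ is compatible with the Gauss–Manin connection on both Brieskorn modules. Once the decomposition is in hand, matching $F^*(\alpha)$ to \eqref{16:48} and discarding $\omega_e$ is routine.
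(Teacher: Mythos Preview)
Your overall architecture matches the paper's: reduce to the reverse inclusion, use the vanishing of $M_1$ to get period conditions on $\omega_1$, apply Lemma~\ref{19:03} to obtain $\omega_1=F^*(\alpha)+\omega_e$, and then identify $\omega_1$ with a tangent vector $\omega_W$ to $\mathcal{P}(2,a,s)$ via Remark~\ref{hM}. The Picard--Lefschetz and Brieskorn-module ingredients you list are exactly the ones the paper uses.

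However, there is a genuine gap in your final step. You write that ``$\omega_e$, being relatively exact, defines a trivial deformation of the foliation, hence contributes nothing to the tangent cone.'' This is false. Relative exactness of $\omega_e/F^*(PQ)$ modulo $\mathcal{F}(F^*(\alpha_0))$ means only that its periods vanish; as a direction in $\Omega^1_{d+1}$ it is a \emph{nontrivial} tangent vector, namely a tangent to the logarithmic component $\mathcal{I}(ms-1,ns-1)$ (it is the first-order variation of $qF^*(Q)dF^*(P)-pF^*(P)dF^*(Q)$ under $F^*(P)\mapsto F^*(P)+\epsilon P_1$, $F^*(Q)\mapsto F^*(Q)+\epsilon Q_1$). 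Not every such $(P_1,Q_1)$ arises from a deformation of the morphism $F$, so a priori $\omega_e$ need not be tangent to $\mathcal{P}(2,a,s)$. In the paper this is handled by a second lemma (Lemma~\ref{correctionterm}) that you have omitted: one must show that the particular $P_1,Q_1$ produced by Corollary~\ref{22:12} satisfy
\[
P_1=q\langle F_1,F^*(\mathrm{grad}\,P)\rangle,\qquad Q_1=p\langle F_1,F^*(\mathrm{grad}\,Q)\rangle
\]
for some $F_1=(R_1,S_1,T_1)\in(\mathbb{C}[x,y,z]^h_s)^3$. The argument is algebraic: wedging $\omega_e$ with $F^*(\alpha_0)$ and using that the ideal of $2\times2$ minors of $J_F$ and the ideal $\langle F^*(\lambda_1),F^*(\lambda_2),F^*(\lambda_3)\rangle$ (with $\alpha_0=\sum\lambda_i\,dX_i$) are comaximal forces $-Q_1F^*(P)+P_1F^*(Q)$ into the latter ideal, which yields the required $F_1$. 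Only then does $\omega_e$ coincide with the $\omega_{pl}$ part of \eqref{omegaW}, and $\omega_1=F^*(\alpha)+\omega_e$ becomes an honest $\omega_W$. Without this step your proof does not close.
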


Let us consider a deformation $\mathcal{F}(\omega_{\epsilon})\in \mathcal{X}(a,s)$ defined  by the following 1-form:  
\begin{equation}\label{11:31}
\omega_{\epsilon} = F^*(\alpha_0)+\epsilon \omega_1+\epsilon^{2}\omega_{2}+\cdots,\ \ \ \ \ \ \ deg(\omega_j)\leq d+1\ \forall j.
\end{equation} 




The hypothesis $\mathcal{F}(\omega_{\epsilon}) \in \mathcal{X}(a,s)$  implies that it always has a tangency center singularity (Definition \ref{TCS}) near the tangency center singularity $p$ of $\mF(\omega_0)$ defined by $\omega_0 = F^\ast(\alpha_0 ).$ 
 We call it also  {\it tangent critical point } of the rational mapping
$ F^\ast(f): \mathbb{P}^2 \rightarrow \mathbb{P}^2$ in view of the circumstance that requires analysis of vanishing cycles and their monodromy associated to $F^\ast(f)$ in \S  \ref{PL}.

Let us consider the affine coordinate  $E$,  and let $\delta_t$ be a continuous family of vanishing cycles in $(F^\ast(f))^{-1}(t)$ around tangent critical point $p$  and 
$\Sigma\cong \mathbb{C}$ be a transverse section of $\mathcal{F}$ at some points of $\delta_t$. We write Taylor expansion of the deformed holonomy $h_{\epsilon}(t)$ 
\[
h_{\epsilon}(t)-t=M_1(t)\epsilon+M_2(t)\epsilon^2+\cdots+M_i(t)\epsilon^i+\cdots 
\]
where $M_i(t)$ is $i-th$ Melnikov function of the deformation (see \cite[Theorem 1.1]{Fr}, \cite[Section 3]{Mo1}).
 If $\Sigma$ is parametrized by the image of $F^\ast(f)$ i.e. $t= F^\ast(f)(\sigma_0) $  for $\sigma_0 \in\Sigma$  then 
\begin{equation} \label{Mk}
M_1(t)= -\int_{\delta_t}\frac{F^{\ast}(f)\omega_1}{F^{\ast}(P Q)} = - t \int_{\delta_t}\frac{\omega_1}{ F^{\ast}(P Q)}.
\end{equation} 
for $\delta_t \in H_1 ( (F^\ast(f)^{-1}(t), \Z),$ the vanishing cycle associated to the tangent critical point $p \in C_{F^\ast(f)}.$
In fact $M_j(t), \forall j \geq 1$ also vanishes for all $t$ near to zero as $F^*(\mathcal{F}(\omega_\eta ))$ has a tangency center singularity
$\forall \eta \in [0, 2\epsilon].$  


The condition $M_1(t) =0$ plays a central r\^ole in the proof of Theorem \ref{8:34:21}. See Lemma 
\ref{19:03}, Lemma \ref{correctionterm}.

\section{Monodromy action on tangency cycles }\label{PL}

In this section we formulate Theorem \ref{thmFast} that establishes a relation between vanishing cycles associated to $f$ and $F^\ast(f).$
Let  $\mathcal{F}$ be a foliation in $\mathbb{P}^2$ with the first integral $f = \frac{P^q}{Q^p}$ like in (\ref{PQfirstintegral}).
In addition to (\ref{PQfirstintegral}) we impose on $\mathcal{F}$ satisfies the following conditions:

\begin{cond}
(1) The curves $V(P)\subset \proj$ and $V(Q) \subset \proj$ are smooth and intersect each other transversally, and also each one has transversal intersection with line at infinity. We denote the set $V(P) \cap V(Q)$  by $\mathcal{R}.$
(2) All critical points of $f= \frac{P^q}{Q^p}$ are distinct and belong to the affine plane $\mathbb{C}^2$.
(3) $deg (P) >  deg(Q) \geq 2, i.e. m >  n \geq 2.$

\label{cond}
\end{cond}

Let us denote by
$$ \sM_ {f} : \pi_1 (\C \setminus C_{f}, B) \longrightarrow GL(H_1(f^{-1}(B),\mathbb{Z}), \Z)$$
the monodromy representation of the fundamental group of the complement to the critical value set $C_{f}$ of $f$  acting on the 
first homology group of a smooth generic fiber $H_1(f^{-1}(B),\mathbb{Z}).$

In a similar manner we consider the following monodromy representation for the first homology group of a smooth generic fiber
$H_1((F^{\ast}(f))^{-1}(b),\mathbb{Z}):$
$$ \sM_ {F^{\ast}(f)} : \pi_1 (\C \setminus C_{ F^\ast(f)}, b) \longrightarrow GL(H_1((F^{\ast}(f))^{-1}(b),\mathbb{Z}), \Z).$$
By virtue of the assumption made on the critical points of $f$ and the genericity of $F$ the monodromy representations  $\sM_ {f}$ and  $\sM_ {F^{\ast}(f)}$ can be realized with integer coefficients.  Here $C_{F^{\ast}(f)}$ denotes the critical value set of $F^{\ast}(f).$ 

Let us call vanishing cycle $\delta_t$ around a tangent critical point {\it tangency vanishing cycle} (see   (\ref{tangencycycles}) below).

\begin{theo}\label{thmFast}
The morphism  
\begin{equation}
F_*:H_1((F^{\ast}(f))^{-1}(b),\mathbb{Z})\rightarrow H_1(f^{-1}(b),\mathbb{Z}) 
\label{Fast}
\end{equation}
 is surjective and $Ker(F_*)$ is generated by the result of the monodromy group action \\ $\sM_ {F^{\ast}(f)} (\pi_1(\mathbb{C}\setminus C_{F^{\ast}(f)},b))$ on a tangency cycle $\delta_t$ around a tangent critical point. 
\end{theo}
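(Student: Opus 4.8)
\medskip
\noindent\textbf{Proof proposal.}
The plan is to reduce the statement to a comparison of two pencil fibrations. Blowing up the base loci of the pencils $\langle P^{q},Q^{p}\rangle$ and $\langle F^{\ast}(P)^{q},F^{\ast}(Q)^{p}\rangle$ produces fibrations $\widehat{f}\colon X\to\mathbb{P}^{1}$ and $\widehat{F^{\ast}(f)}\colon \widetilde{X}\to\mathbb{P}^{1}$, and $F$ induces a dominant morphism $\widetilde{X}\to X$ over $\mathbb{P}^{1}$ which restricts on a generic fibre to a branched covering $\Phi_{b}\colon (F^{\ast}(f))^{-1}(b)\to f^{-1}(b)$ of degree $s^{2}$, with $F_{\ast}=(\Phi_{b})_{\ast}$. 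Under Condition \ref{cond} and the genericity of $F$ I would first record three local models: (a) over a critical point of $f$ (which lies off $D$) the map $F$ is a biholomorphism, so a preimage is a non-degenerate critical point of $F^{\ast}(f)$ carrying the same local vanishing cycle; (b) over a transversal point of $f^{-1}(b)\cap F(D)$ the covering $\Phi_{b}$ is a simple fold $(x,y)\mapsto (x^{2},y)$; (c) at a tangent critical point $p$ of $F^{\ast}(f)$ Definition \ref{TCS} gives the local form $F^{\ast}(f)=x^{2}+y^{2}$, so $\delta_{t}$ generates $H_{1}$ of the cylinder $\{x^{2}+y^{2}=t\}$ while $F$ maps that cylinder $2$-to-$1$ onto the disc $\{X+Y^{2}=t\}$.

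The easy inclusion comes next. From (c) we get $F_{\ast}(\delta_{t})=0$. Since $F$ realises $\widetilde{X}\to X$ as a morphism of fibrations over the \emph{same} base, the representations $\sM_{F^{\ast}(f)}$ and $\sM_{f}$ are intertwined by $F_{\ast}$ along the surjection $\pi_{1}(\mathbb{C}\setminus C_{F^{\ast}(f)},b)\to\pi_{1}(\mathbb{C}\setminus C_{f},b)$ (the inclusion $C_{f}\subset C_{F^{\ast}(f)}$ holds by (a)); hence $\mathrm{Ker}(F_{\ast})$ is $\sM_{F^{\ast}(f)}$-invariant, and together with $F_{\ast}(\delta_{t})=0$ this yields $G_{\delta_{t}}\subseteq\mathrm{Ker}(F_{\ast})$, where $G_{\delta_{t}}:=\langle\,\sM_{F^{\ast}(f)}(\gamma)\,\delta_{t}\ :\ \gamma\in\pi_{1}(\mathbb{C}\setminus C_{F^{\ast}(f)},b)\,\rangle$; by genericity the tangent critical values form a single monodromy orbit of nodal fibres, so $G_{\delta_{t}}$ already contains every tangency cycle. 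For surjectivity of $F_{\ast}$ I would invoke Picard--Lefschetz theory for the pencil $\langle P^{q},Q^{p}\rangle$ under Condition \ref{cond} (cf. \cite{Mo1}): $H_{1}(f^{-1}(b),\mathbb{Z})$ is generated by the vanishing cycles of $f$, and each such cycle is by (a) the $F_{\ast}$-image of a vanishing cycle of $F^{\ast}(f)$ at one of the $s^{2}$ preimages of the corresponding critical point of $f$.

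The main point is the reverse inclusion $\mathrm{Ker}(F_{\ast})\subseteq G_{\delta_{t}}$. The crucial observation is that $C_{F^{\ast}(f)}\setminus C_{f}=\{t_{1},\dots,t_{k}\}$ consists precisely of the tangency values, and that the quotient local system $R^{1}(F^{\ast}(f))_{\ast}\mathbb{Z}\big/\mathcal{K}$ (isomorphic to $R^{1}f_{\ast}\mathbb{Z}$ via $F_{\ast}$, where $\mathcal{K}=\mathrm{Ker}(F_{\ast})$) extends as a local system across each $t_{j}$; hence a small loop $\gamma_{j}$ about $t_{j}$ acts trivially on the quotient, so $\mathrm{Im}\bigl(\sM_{F^{\ast}(f)}(\gamma_{j})-\mathrm{id}\bigr)\subseteq\mathrm{Ker}(F_{\ast})$. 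By (c), $\sM_{F^{\ast}(f)}(\gamma_{j})$ is the Picard--Lefschetz reflection in the primitive class $\delta_{t}^{(j)}$, so $\mathrm{Im}\bigl(\sM_{F^{\ast}(f)}(\gamma_{j})-\mathrm{id}\bigr)=\mathbb{Z}\,\delta_{t}^{(j)}$ by unimodularity of the intersection form; a short argument with the normal closure of $\{\gamma_{j}\}$ then identifies $G_{\delta_{t}}$ with the augmentation submodule of that normal subgroup acting on $H_{1}((F^{\ast}(f))^{-1}(b),\mathbb{Z})$. Thus $\mathrm{Ker}(F_{\ast})\subseteq G_{\delta_{t}}$ is equivalent to exactness of
\[
0\longrightarrow G_{\delta_{t}}\longrightarrow H_{1}\!\bigl((F^{\ast}(f))^{-1}(b),\mathbb{Z}\bigr)\xrightarrow{\ F_{\ast}\ } H_{1}\!\bigl(f^{-1}(b),\mathbb{Z}\bigr)\longrightarrow 0 .
\]
I would establish this by a Riemann--Hurwitz rank count for $\Phi_{b}$, giving $\mathrm{rk}\,\mathrm{Ker}(F_{\ast})=2g\bigl((F^{\ast}f)^{-1}(b)\bigr)-2g\bigl(f^{-1}(b)\bigr)$, combined with a big--monodromy (Zariski density) input showing that the orbit $G_{\delta_{t}}$ already attains this rank over $\mathbb{Q}$, and a saturation step: $\mathrm{Ker}(F_{\ast})$ is primitive as the kernel of a map of free $\mathbb{Z}$-modules, and $G_{\delta_{t}}$ is primitive because each Picard--Lefschetz contribution is $\mathbb{Z}\,\delta_{t}^{(j)}$ exactly; alternatively, a Mayer--Vietoris comparison of $\widehat{F^{\ast}f}^{-1}(\mathbb{C}\setminus C_{f})$ with $\widehat{f}^{-1}(\mathbb{C}\setminus C_{f})$ through the Lefschetz thimbles over the $t_{j}$ delivers the displayed sequence directly over $\mathbb{Z}$. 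The genuine obstacle is controlling the $\mathbb{Q}$-span of the monodromy orbit: surjectivity of $F_{\ast}$ only bounds $\mathrm{rk}\,G_{\delta_{t}}$ from above, and one must use the genericity of $F$ to guarantee that $\sM_{F^{\ast}(f)}$ is large enough that $\mathrm{Ker}(F_{\ast})\otimes\mathbb{Q}$ has no proper invariant subspace through $\delta_{t}$.
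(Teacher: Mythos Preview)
Your setup (the degree-$s^{2}$ branched covering $\Phi_{b}$, the three local models, surjectivity of $F_{\ast}$ via lifting vanishing cycles, and the monodromy-invariance of $\mathrm{Ker}(F_{\ast})$ giving $G_{\delta_{t}}\subseteq\mathrm{Ker}(F_{\ast})$) is correct and matches the paper's. The divergence is in the hard direction $\mathrm{Ker}(F_{\ast})\subseteq G_{\delta_{t}}$. The paper does \emph{not} appeal to Zariski density or to an irreducibility property of $\mathrm{Ker}(F_{\ast})\otimes\mathbb{Q}$; instead it works combinatorially with the Dynkin diagram of $F^{\ast}(f)$. That diagram is connected, and removing the vertices coming from critical points on $D$ leaves $s^{2}$ disjoint copies $P_{1},\dots,P_{s^{2}}$ of the Dynkin diagram of $f$, joined only through tangency vertices. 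Using Picard--Lefschetz along edges incident to a tangency vertex, the paper shows concretely that the monodromy orbit of a single $\delta_{t}$ already produces every cycle $\delta_{p}$ with $p\in D$ (both tangency and exceptional) and every difference $\delta_{c}^{i}-\delta_{c}^{j}$ of the $s^{2}$ lifts of a vanishing cycle of $f$. One then reads off $\mathrm{rk}\,G_{\delta_{t}}=(s^{2}-1)\mu_{f}+\rho_{D}$, while \cite[Theorem~4.9]{YZ} gives $\mu=s^{2}\mu_{f}+\rho_{D}$, so $\mathrm{rk}\,G_{\delta_{t}}=\mu-\mu_{f}=\mathrm{rk}\,\mathrm{Ker}(F_{\ast})$, and equality follows.

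The step you flag as the ``genuine obstacle'' is exactly what the paper's Dynkin-diagram argument replaces: no big-monodromy input is needed because the generators of $G_{\delta_{t}}$ are exhibited by hand. Two of your auxiliary claims also need repair. First, the fibres are affine curves, so $\mathrm{rk}\,H_{1}$ is $2g+(\text{punctures})-1$, not $2g$; your Riemann--Hurwitz identity $\mathrm{rk}\,\mathrm{Ker}(F_{\ast})=2g((F^{\ast}f)^{-1}(b))-2g(f^{-1}(b))$ is therefore off, and one really wants the explicit counts $\mu_{f}=(m+n-1)^{2}-mn$ and $\mu=(s(m+n)-1)^{2}-s^{2}mn$ that the paper uses. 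Second, primitivity of $G_{\delta_{t}}$ does not follow from each Picard--Lefschetz image being $\mathbb{Z}\,\delta_{t}^{(j)}$: a subgroup generated by primitive vectors need not be saturated (and the intersection form on an open curve is degenerate, so ``unimodularity'' is not available). The paper sidesteps this by exhibiting an explicit generating set for $G_{\delta_{t}}$ whose rank matches $\mathrm{Ker}(F_{\ast})$; if you want a clean $\mathbb{Z}$-statement you should either reproduce that explicit set or run your Mayer--Vietoris alternative carefully rather than the rank-plus-saturation route.
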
    
\begin{proof}
For $D$ defined in (\ref{D}) the morphism $F_|:\mathbb{C}^2\setminus D\to\mathbb{C}^2\setminus F(D)$  is a covering map. Inverse image of each vanishing cycle $\Delta\in H_1(f^{-1}(B),\mathbb{Z})$ via $F_|^{-1}$ contains $s^2$ disjoints vanishing cycles. Moreover, $H_1((F^{\ast}(f))^{-1}(b),\mathbb{Z})$ contains a subgroup consisiting of $s^2$ copies of a group isomorphic to $H_1(f^{-1}(B),\mathbb{Z})$.

Let us take a pull-back vanishing cycle $\delta_1$ such that $F_*(\delta_1)=\Delta.$  According to \cite[ (7.3.5)]{La}, \cite[Theorem 2.3]{Mo5} the action of the monodromy group $\sM_ {F^{\ast}(f)} \left( \pi_1(\mathbb{C}\setminus C_{F^{\ast}(f)},b) \right)$ on the vanishing cycle $\delta_1$  generates a subgroup of $H_1((F^{\ast}(f))^{-1}(b),\mathbb{Z})$ which is isomorphic to $H_1(f^{-1}(B),\mathbb{Z})$.

 It is well-known that Dynkin diagram of $F^{\ast}(f)$ is connected, see for instance \cite{A-M}. If we remove the vertices which correspond to vanishing cycles around the tangent critical points in $D$ (we call them {\it tangency vertices}) the Dynkin diagram becomes $s^2$ disjoint graphs $P_i$, $1\leq i\leq s^2$ each of which is isomorphic to the Dynkin diagram of $f$. See
 \cite[Figure 9]{YZ}.

There exists a local chart around the tangent critical point $p\in D$ such that in this chart $F=(x^2,y)$. Therefore the graph $P_i$ is connected to a graph $P_j$ by tangency vertices. Indeed, a tangency vertex corresponding to a tangency cycle around a tangent critical point $p\in D$ is connected to some $P_i$ or to another vertex corresponding to a vanishing cycle around $p\in D$. Therefore 
we conclude that the monodromy group $\sM_ {F^{\ast}(f)} (\pi_1 (\C \setminus C_{F^{\ast}(f)}, b))$ actions  on the tangency cycle $\delta_t$ generate cycles of the following two types.  The difference between two vanishing cycles associated to different critical values  $c_i\not = c_j $ of $F^\ast(f)$ satisfying $F(c_i) = F(c_j) =c,$ $c\in C_{F^{\ast}(f)}.$
 \begin{equation}
\delta_c^i-\delta_c^j\ ,  \ 1\leq i,j\leq s^2 , \ \ .
    \label{deltac}
\end{equation}
and vanishing cycles that are associated to $p \in D,$ 
 \begin{equation}
\ \ \delta_p ,  \;\;\;\;\ p\in D.
    \label{tangencycycles}
\end{equation}
In \cite[Theorem 4.9, Figure 9]{YZ} cycles (\ref{tangencycycles}) are divided into {\it tangency} and {\it exceptional  vanishing cycles}.

We denote by $H$ the free Abelian group generated by cycles  (\ref{deltac}), (\ref{tangencycycles}) that is a subgroup of 
  $ker(F_*).$

The space of cycles of type (\ref{deltac}) has rank $ (s^2 -1) \mu_f$ for $\mu_f ={\rm rank}\; H_1(f^{-1}(b), \mathbb{Z}).$
More precisely we calculate $\mu_f = (m+n-1) - mn$ as $|V(P) \cap V(Q)|= mn$ (see Proposition \ref{globalMilnor}).  In a similar manner the equality
$ \mu = {\rm rank} \;H_1((F^{\ast}(f))^{-1}(b),\mathbb{Z})  = (s(m+n)-1)^2 - s^2 mn$ holds.

Thus the rank of the space $H$ 
is equal to 
 \begin{equation}
(s^2 -1)  \mu_f+ \rho_D 
\label{mub}
 \end{equation}  for $\rho_D:$ the rank of cycles (\ref{tangencycycles}).

Here we remark that 
 \begin{equation}
 \mu = {\rm rank} \;H_1((F^{\ast}(f))^{-1}(b),\mathbb{Z})  =  s^2  \mu_f + \rho_D
\label{mu}
 \end{equation}
thanks to \cite[Theorem 4.9]{YZ}.

From the definition of the morphism $F_*$ (\ref{Fast}) we see that $rank (ker(F_*)  ) = \mu - \mu_f.$


The combination of (\ref{mub}), (\ref{mu})  shows the equality  $rank(H) = rank(ker(F_*)).$  Thus together with  $H\subseteq ker(F_*)$ we conclude $H=ker(F_*).$

\end{proof} 

\section{Relatively Exact 1-forms}\label{REx}

A foliation $\mathcal{F} = \mF(\omega)$ defined by a holomorphic 1-form $\omega$ is called \emph{integrable} if there exists a meromorphic function $f$ on $\mathbb{P}^2$ such that 
 $df \wedge \omega|_{\mF}=0$. 
In this case the meromorphic function $f$ is said to be the \emph{first integral} of $\mathcal{F}$.

The concept of relatively exact forms has been investigated by many authors, e.g. \cite{Mu}, \cite[Section 4]{Mo1}.

\begin{defi}
A meromorphic 1-form $\omega$ on $\mathbb{P}^2$ is called relatively exact modulo a foliation $\mathcal{F}$  in $\mathbb{P}^2$  if the restriction of $\omega$ to each leaf $\mathcal{L}$ of $\mathcal{F}$ is exact, i.e. there is a meromorphic function $g$ on $\mathcal{L}$ such that $\omega|\mathcal{L}=dg$. 
\end{defi}

Let us call $f$- fiber the set $\{ u \in \proj \setminus {\mathcal R}: f(u) =t \}$ defined for some $t \in \C $. See Conditions \ref{cond}.

\begin{prop}\cite[\S 2]{Mu} \label{3:53}
A 1-form $\omega$ is relatively exact modulo the foliation $\mathcal{F}(df)$ with rational first integral $f$ if and only if 
\[
\int_{\delta} \omega=0
\] 
for every closed curve $\delta$ in a  $f-$fiber. 
\end{prop}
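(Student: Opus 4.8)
I would prove the two implications separately; the forward one is immediate from the fundamental theorem of calculus, and the real content is the converse, namely building a relative primitive of $\omega$ on each leaf out of the vanishing of all its periods. For the forward direction, suppose $\omega$ is relatively exact modulo $\mathcal{F}(df)$ and let $\delta$ be a closed curve contained in an $f$-fiber $L_t=\{f=t\}$. Then $\delta$ lies in a single connected component $\mathcal{L}$, a leaf, and by hypothesis $\omega|_{\mathcal{L}}=dg$ for some meromorphic $g$ on $\mathcal{L}$. A globally exact meromorphic differential has zero residue at every point, so $\int_{\delta}\omega$ depends only on the homology class of $\delta$ in $\mathcal{L}$ minus the finitely many poles of $g$; deforming $\delta$ to avoid those poles and using that $g$ is single-valued gives $\int_{\delta}\omega=\int_{\delta}dg=0$.

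\textbf{Converse.} Assume $\int_{\delta}\omega=0$ for every closed curve $\delta$ lying in an $f$-fiber. Fix $t$ generic and let $\mathcal{L}$ be a connected component of $\{f=t\}$ (with the base locus $\mathcal{R}$ removed), a smooth affine Riemann surface on which $\omega$ restricts to a meromorphic $1$-form $\omega_{\mathcal{L}}$; let $P\subset\mathcal{L}$ be its finite polar set. On $\mathcal{L}\setminus P$ define
\[
g(x):=\int_{x_0}^{x}\omega_{\mathcal{L}},
\]
the integral taken along a path in $\mathcal{L}\setminus P$. Well-definedness of $g$ is equivalent to $\int_{\gamma}\omega_{\mathcal{L}}=0$ for every loop $\gamma\subset\mathcal{L}\setminus P$. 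The group $H_1(\mathcal{L}\setminus P)$ is generated by $H_1(\mathcal{L})$ together with small loops $\gamma_p$ encircling the points $p\in P$: the hypothesis kills the first family directly, and applied to $\gamma_p$ — which is itself a closed curve inside the $f$-fiber — it forces $\mathrm{Res}_p\,\omega_{\mathcal{L}}=0$ for each $p\in P$. Hence $g$ is a single-valued holomorphic function on $\mathcal{L}\setminus P$, and since $\omega_{\mathcal{L}}$ has only residue-free poles, $g$ extends to a meromorphic function on all of $\mathcal{L}$ with $dg=\omega_{\mathcal{L}}$. This is precisely relative exactness of $\omega$ along the leaf $\mathcal{L}$. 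The same argument runs verbatim on every leaf that is a component of a smooth fiber; for the remaining special fibers one passes to the limit, noting that the periods on nearby generic leaves vanish identically so the primitive has no monodromy across the special leaf (and in the sequel only the smooth fibers carrying the vanishing cycles $\delta_t$ are actually needed).

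\textbf{Main obstacle.} The delicate point is not the path-independence of $g$ over the topologically nontrivial cycles of $\mathcal{L}$, which is the literal hypothesis, but the meromorphic extension of $g$ across the polar set $P$: a priori $g$ is only single-valued on $\mathcal{L}\setminus P$ and could acquire logarithmic terms at the poles of $\omega_{\mathcal{L}}$, which would make it genuinely multivalued on $\mathcal{L}$ and would obstruct relative exactness. This is exactly where one must test the hypothesis against the small loops $\gamma_p$ (closed curves in the fiber) and not merely against a basis of $H_1(\mathcal{L})$; once the residues at $P$ are known to vanish, the removable-singularity/Laurent argument closes the proof.
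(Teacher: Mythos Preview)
Your argument is correct for the proposition as stated, but the route differs from the paper's. You construct the primitive leaf by leaf: pick a base point on each generic fiber component and integrate, using the hypothesis both on topological cycles and on small residual loops to get a single-valued meromorphic $g$ on that leaf. The paper instead builds one \emph{global} function on $\mathbb{P}^2\setminus\mathcal{R}$ by an averaging trick: it fixes a line $L$ transverse to $\mathcal{F}$, writes $f^{-1}(f(u))\cap L=\{p_1,\dots,p_r\}$, and sets
\[
g(u)=\frac{1}{r}\sum_{i=1}^{r}\int_{u}^{p_i}\omega
\]
with the integrals taken along paths inside the fiber through $u$; monodromy merely permutes the $p_i$, so the average is single-valued, and Levi--Hartogs extension then makes $g$ meromorphic off $V(P)\cup V(Q)$. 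Your approach is more elementary and sidesteps the extension theorems, but it only delivers a separate primitive on each leaf with no control on how these primitives vary transversally. The paper's construction is not incidental: the proof of the very next theorem explicitly reuses this global $g$ and bounds its pole orders along $V(P)$ and $V(Q)$ in order to obtain the decomposition $\omega=dg+T\omega_0$ with $g,T$ \emph{rational on $\mathbb{P}^2$}, something your leafwise primitives do not immediately provide. (Your appeal to ``passing to the limit'' for the special fibers is also not really a proof --- primitives need not extend continuously across a degeneration --- though, as you note, this is immaterial for the downstream application.)
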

\begin{proof}
Let $L$ be a line in $\mathbb{P}^2$ which is not $\mathcal{F}$-invariant and does not pass through the point in $\mR$ as in Conditions \ref{cond}.
 For any point $u\in U:=\proj \setminus \mathcal{R}$  let 
 \[
f^{-1}(f(u))\cap L=\{p_1,p_2,\cdots,p_{r}\} 
 \]
 where the intersection multiplicity of $p_i$ might be greater than $1$. 

Define 
\begin{equation}\label{4:20}
\begin{split}
g&:\proj \setminus  \mR \to \mathbb{C}\\
& g(u)=\frac{1}{r}(\sum_{i=1}^r\int_{u}^{p_i}\omega)
\end{split}
\end{equation}
where $\int_{u}^{p_i}$ is an integral over a path in $f^{-1}(f(u))$ which connects $u$ to $p_i$. The function $g$ is well-defined and does not depend on the choice of the paths connecting $u$ to $p_i$ because 
$\int_\delta\omega=0$ on any close curve $\delta$ in each level set $f^{-1}(f(u))$.  Furthermore it is clear that a monodromy action on the line $L$ leaves the set  $L\cap f^{-1}(f(u))$ invariant as it induces merely a permutation among its points. Thus we conclude that $g(u)$ is a meromorphic function on $\proj \setminus (V(P) \cup V(Q))$ in taking Levi extension theorem and Hartogs theorem into account. 
\end{proof}
 
A function $f$ is called {\it non-composite} if every generic $f$-fiber is irreducible.  It is easy to see that $f$ is non-composite if and only if $f$ can not be factored as a composite
\begin{equation}
\mathbb{P}^2\overset{f'}\rightarrow\bar{\mathbb{C}}\overset{i}\rightarrow\bar{\mathbb{C}}
\label{composite}
\end{equation}
where $i$ is a non-constant holomorphic map.
In fact, if the composite factorization like (\ref{composite}) does not take place then the  generic $f$- fiber cannot be  reducible. From (\ref{composite})  the reducibility of generic $f$-fiber follows.

 Let $f=\frac{P^q}{Q^p}$ be a rational  function satisfying Conditions \ref{cond}
and suppose that for every $t\in\mathbb{C}$ the fiber $f^{-1}(t)$ is connected.  Let $\mathcal{F}(\omega_0)$  be a foliation on $\mathbb{P}^2$  with the non-composite  first integral $f$ not satisfying \eqref{composite} for 
\begin{equation} 
\omega_0=\frac{df}{f}.
\label{omega0}
\end{equation}
 Let $\omega$ be a rational 1-form with the pole divisor 
\begin{equation} 
\tilde D=n_1D_1+n_2D_2,\  \  \  \ where\  \  \  D_1:=V(P), D_2:=V(Q).
\label{Ddivisor}
\end{equation}

\begin{theo}\label{4:28:05}

  Every relatively exact rational  1-form $\omega$ modulo $\mathcal{F}(\omega_0)$ with pole divisor \eqref{Ddivisor} $\tilde D$ has the form 
\begin{equation}
\omega=dg+T\omega_0
\end{equation}
where $g$ and $T$ are rational  functions with the pole divisor $\tilde D$.
\end{theo}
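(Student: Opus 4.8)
The plan is to combine Proposition \ref{3:53} with Theorem \ref{4:28:05}'s hypotheses by first using relative exactness to produce a meromorphic primitive along the leaves, then controlling its polar behaviour. Since $\omega$ is relatively exact modulo $\mathcal{F}(\omega_0)$ and $\omega_0 = df/f$ has the same leaves as $\mathcal{F}(df)$, Proposition \ref{3:53} applies verbatim: $\int_\delta \omega = 0$ for every closed curve $\delta$ lying in an $f$-fiber. By the construction \eqref{4:20} in the proof of that proposition, there is a meromorphic function $g$ on $\proj \setminus (V(P)\cup V(Q))$ whose restriction to each leaf $\mathcal{L}$ satisfies $\omega|_{\mathcal{L}} = dg|_{\mathcal{L}}$. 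Equivalently, the 1-form $\omega - dg$ vanishes on every leaf of $\mathcal{F}(\omega_0)$, hence — since $\omega_0$ also vanishes on every leaf and the space of 1-forms vanishing on the (one-dimensional) leaves of a foliation is, at a generic point, spanned by $\omega_0$ — there is a meromorphic function $T$ on $\proj$ with
\[
\omega - dg = T\,\omega_0.
\]
I would make this last step precise by working on the Zariski-open set where $\omega_0$ is nonvanishing and $f$ is a submersion: there $\omega - dg$ and $\omega_0$ are both sections of the conormal line bundle to the foliation, so their ratio $T$ is a well-defined meromorphic (indeed holomorphic on that open set) function, which then extends meromorphically to all of $\proj$ by Levi/Hartogs as in the proof of Proposition \ref{3:53}.

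The remaining and genuinely substantive point is the pole control: one must show $g$ and $T$ are rational with pole divisor supported on $\tilde D = n_1 D_1 + n_2 D_2$. First, $g$ is single-valued and meromorphic on $\proj \setminus (D_1\cup D_2)$ by construction, and one checks that along the line at infinity and away from $\mathcal{R}$ it has no poles (the integrand $\omega$ has poles only on $\tilde D$ by hypothesis, and the path integrals in \eqref{4:20} stay in a fixed $f$-fiber, so $g$ picks up poles only where $\omega$ does or where the fiber degenerates, i.e. on $D_1 \cup D_2$); hence $g$ is rational with pole divisor contained in some multiple of $D_1 + D_2$. To pin the multiplicities to exactly $n_1, n_2$ one analyses the order of $dg$ along $D_i$: since $\omega$ has pole of order $n_i$ along $D_i$ and $\omega_0 = df/f = q\,dP/P - p\,dQ/Q$ has a simple pole along each $D_i$, the identity $\omega = dg + T\omega_0$ forces the order of pole of $dg$ along $D_i$ to be at most $\max(n_i, \operatorname{ord}_{D_i} T + 1)$, and a local computation near a smooth point of $D_i$ (using Conditions \ref{cond}(1), so $D_i$ is smooth and transverse to the line at infinity) shows that $g$ itself has a pole of order $n_i - 1$ at most while $T$ has a pole of order $n_i - 1$ at most, giving the claimed bound on $\tilde D$.

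The main obstacle I anticipate is precisely this last local pole-order bookkeeping along $D_1$ and $D_2$, and in particular verifying that no extra poles are created along the line at infinity or at the intersection points $\mathcal{R} = V(P)\cap V(Q)$. Near a point of $\mathcal{R}$ the function $f = P^q/Q^p$ has an indeterminacy, several leaves accumulate, and the naive primitive \eqref{4:20} could a priori blow up; the resolution is to use that $\mathcal{R}$ has codimension two together with the transversality in Conditions \ref{cond}(1), so Hartogs' extension theorem removes these points as non-genuine singularities of $g$ (this is exactly the mechanism already invoked at the end of the proof of Proposition \ref{3:53}). Once $g$ is known to be rational with poles only on $D_1\cup D_2$, the formula $T = (\omega - dg)/\omega_0$ shows $T$ is rational with poles only on $D_1 \cup D_2$ as well, and comparing pole orders on both sides of $\omega = dg + T\omega_0$ — now a relation among honest rational 1-forms — yields that both pole divisors are dominated by $\tilde D$, completing the proof. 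I would also remark that the connectedness of every fiber $f^{-1}(t)$, assumed in the hypotheses, is what guarantees the primitive $g$ in \eqref{4:20} is genuinely well-defined (the permutation-of-points argument), so this hypothesis enters essentially and should be flagged in the write-up.
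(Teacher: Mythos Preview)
Your overall architecture matches the paper's: build $g$ from the averaged path integral \eqref{4:20}, observe that $(\omega-dg)\wedge\omega_0=0$, and read off $T$. The substantive step, as you correctly flag, is the pole control for $g$ along $D_1$ and $D_2$, and here your proposal has a real gap.

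Your plan is to extract the pole bounds on $g$ and $T$ from the algebraic identity $\omega=dg+T\omega_0$ together with a ``local computation.'' But the identity alone cannot do this: nothing prevents $dg$ and $T\omega_0$ from each having a pole of arbitrarily high order along $D_i$ that cancels in the sum. You must bound the pole order of $g$ \emph{independently}, before $T$ is even defined, and you do not indicate how. (Your stated bound $n_i-1$ for $g$ is also off; the paper obtains $n_i$.)

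The paper resolves this by going back to the integral definition of $g$. On the fiber $f^{-1}(f(u))$ the function $f^{n_1/q}$ is constant (equal to $f(u)^{n_1/q}$, well-defined as a single-valued branch in a simply connected neighborhood of the path), so
\[
\int_u^{p_i}\omega \;=\; f(u)^{-n_1/q}\int_u^{p_i} f^{n_1/q}\,\omega.
\]
Since $f=P^q/Q^p$ vanishes to order $q$ along $D_1=V(P)$, the factor $f^{n_1/q}$ has a zero of order $n_1$ there, so $f^{n_1/q}\omega$ is holomorphic across $D_1$ and the second integral is holomorphic in $u$ near $D_1$. The prefactor $f(u)^{-n_1/q}$ then gives exactly a pole of order $n_1$. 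The same argument with $f^{-n_2/p}$ handles $D_2$. Once $g$ is known to have pole divisor $\le \tilde D$, the bound for $T=(\omega-dg)/\omega_0$ follows by the routine comparison you sketch. This factorisation trick is the one idea your proposal is missing.
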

The following is a modification of $\cite[Theorem \ 4.1]{Mo1}$  adapted to our situation.
 \begin{proof}
 The  function $g $ in \eqref{4:20} is a holomorphic function in $\mathbb{P}^2\setminus (D_1\cup D_2)$. For a point $u\in U\setminus (D_1\cup D_2)$, by the hypothesis  $q,p$ are the multiplicities of $f$ along $D_1, D_2$, respectively. The function $f^{\frac{n_1}{q}}$ is an  univalent function in a small neighborhood of the path connecting $u$ to $p_i$ and we have 
\[\int_{u}^{p_i}\omega=f^{(-\frac{n_1}{q})}\int_{u}^{p_i}f^{(\frac{n_1}{q})}\omega\]
 $f^{\frac{n_1}{q}}\omega$ is a holomorphic 1-form along $U\cap D_1$ therefore the above integral  has poles of order at most $n_1$ along $D_1$. 
 By using the chart around infinity and applying the above argument to $D_2$ once again, one can check that each component integral in \eqref{4:20} has poles of order
 at most $n_2$ along $D_2$.  
The equalities 
\[dg\wedge \omega_0=\omega\wedge \omega_0 \Rightarrow (\omega-dg)\wedge\omega_0=0\]
imply that there is a rational  function $T$ with pole divisor $\tilde D$ such that $\omega=dg+T\omega_0$.
 \end{proof}

\begin{coro}\label{22:12}
Suppose that $\omega$ is a polynomial homogeneous $1-form$  on $\mathbb{P}^2$ with $\deg(\mathcal{F}(\omega))= deg(\mathcal{F}(\omega_0))$ and $\frac{\omega}{F^{\ast}(P Q)}$ is relatively exact modulo $\mathcal{F}(\omega_0)$. Then there are polynomials $(P_1,Q_1)\in\mathcal{P}_{ms}\times\mathcal{P}_{ns}$ such that $\omega$ has the form 
\begin{equation}\label{22:20}
\omega=qF^{\ast}(Q)dP_1-pP_1dF^{\ast}(Q)-qQ_1dF^{\ast}(P) + pF^{\ast}(P)dQ_1.
\end{equation}
\end{coro}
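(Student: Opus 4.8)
The plan is to derive \eqref{22:20} from Theorem \ref{4:28:05} applied to the foliation $\mathcal{F}(\omega_0)$ with $\omega_0 = \frac{df}{f}$, where $f = \frac{F^\ast(P)^q}{F^\ast(Q)^p}$ is (up to the pull-back) the rational first integral in play. First I would observe that the hypothesis that $\frac{\omega}{F^\ast(PQ)}$ is relatively exact modulo $\mathcal{F}(\omega_0)$ is exactly the hypothesis of Theorem \ref{4:28:05} for the $1$-form $\widetilde{\omega} := \frac{\omega}{F^\ast(PQ)}$; one must only check that its pole divisor has the shape $\widetilde{D} = n_1 D_1 + n_2 D_2$ with $D_1 = V(F^\ast(P))$, $D_2 = V(F^\ast(Q))$. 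Since $\omega$ is a polynomial (homogeneous) $1$-form, $\widetilde{\omega}$ has poles only along $\{F^\ast(P)=0\}\cup\{F^\ast(Q)=0\}$, each of order at most $1$; so Theorem \ref{4:28:05} gives $\widetilde{\omega} = dg + T\omega_0$ with $g,T$ rational functions whose pole divisor is contained in $\widetilde{D}$.

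Next I would unwind this. Writing $\omega_0 = \frac{df}{f} = q\frac{dF^\ast(P)}{F^\ast(P)} - p\frac{dF^\ast(Q)}{F^\ast(Q)}$ and multiplying through by $F^\ast(PQ)$ yields
\begin{equation*}
\omega = F^\ast(PQ)\,dg + T\bigl(qF^\ast(Q)\,dF^\ast(P) - pF^\ast(P)\,dF^\ast(Q)\bigr).
\end{equation*}
The task is then to recognize the right-hand side in the form \eqref{22:20}. The degree hypothesis $\deg(\mathcal{F}(\omega)) = \deg(\mathcal{F}(\omega_0))$ pins down the homogeneous degrees: $\omega$ has coefficients of degree $d+1 = ms + ns - 1$, so that $F^\ast(PQ)\,dg$ being polynomial of that degree forces $g$ to be a rational function of degree $0$ whose only possible poles are simple along $D_1, D_2$ — hence $g = \frac{P_1}{F^\ast(P)} + \frac{Q_1'}{F^\ast(Q)}$ for homogeneous polynomials $P_1 \in \mathcal{P}_{ms}$ and $Q_1' \in \mathcal{P}_{ns}$ (partial-fraction decomposition on $\mathbb{P}^2$, using that $V(F^\ast(P))$ and $V(F^\ast(Q))$ meet properly). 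Similarly $T$ must be a constant plus simple-pole terms; absorbing the constant and reorganizing, one writes $T F^\ast(PQ)$ in terms of the same numerators. Substituting $g$ back and using the Leibniz rule $F^\ast(Q)\,d\!\left(\frac{P_1}{F^\ast(P)}\right) = \frac{F^\ast(Q)}{F^\ast(P)}dP_1 - \frac{P_1 F^\ast(Q)}{F^\ast(P)^2}dF^\ast(P)$, and then multiplying by $F^\ast(P)$, the poles cancel and one collects exactly $qF^\ast(Q)dP_1 - pP_1 dF^\ast(Q) - qQ_1 dF^\ast(P) + pF^\ast(P)dQ_1$ after relabeling $Q_1$ to combine the $Q_1'$ and $T$ contributions; the coefficients $q, p$ come out from the multiplicities of $f$ along $D_1, D_2$, which is where the $q,p$ in $\omega_0$ feed in.

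The main obstacle I expect is the bookkeeping in the previous paragraph: showing that the rational functions $g$ and $T$ produced abstractly by Theorem \ref{4:28:05} can be chosen (after possibly adding a relatively exact correction $dh$ with $h$ a first integral, i.e. a function of $f$, which does not change $\widetilde\omega$) so that $F^\ast(PQ)\,dg$ and $T$ are genuinely polynomial of the correct homogeneous degree with the precise divisor structure, and that the resulting numerators land in $\mathcal{P}_{ms}\times\mathcal{P}_{ns}$ rather than some larger space. This is a matter of comparing pole orders along $D_1$ and $D_2$ (each at most $1$, by the polynomiality of $\omega$ and Conditions \ref{cond} on transversality), counting homogeneous degrees, and invoking that $F^\ast(P), F^\ast(Q)$ form a regular sequence so that partial fractions and the Euler relation give a unique normal form. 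Everything else is the Leibniz-rule computation already indicated in Remark \ref{hM} (cf. the calculation of \eqref{3:32}), carried out in reverse.
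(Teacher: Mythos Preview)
Your overall strategy---apply Theorem~\ref{4:28:05} to $\widetilde\omega=\omega/F^\ast(PQ)$ and then unwind the algebra---is exactly what the paper does. The gap is in your proposed algebraic step: the partial-fraction decomposition $g=\dfrac{P_1}{F^\ast(P)}+\dfrac{Q_1'}{F^\ast(Q)}$ is \emph{not} available in general. A rational function with simple poles along $D_1$ and $D_2$ has the form $g=\dfrac{B}{F^\ast(P)\,F^\ast(Q)}$, and splitting it as you propose amounts to writing $B=P_1\,F^\ast(Q)+Q_1'\,F^\ast(P)$, i.e.\ $B\in\langle F^\ast(P),F^\ast(Q)\rangle$. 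Since $V(F^\ast(P))\cap V(F^\ast(Q))\neq\emptyset$, this ideal is proper (think of $1/(xy)$, which cannot be written as $a/x+b/y$ with polynomial $a,b$). Invoking that $F^\ast(P),F^\ast(Q)$ form a regular sequence does not help here; regularity gives the expected codimension, not surjectivity onto all numerators of the relevant degree. So the step you flag as ``bookkeeping'' is actually where the argument breaks.

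The paper avoids this by \emph{not} separating $g$ and $T$ prematurely. It writes $g=\dfrac{B}{F^\ast(P)F^\ast(Q)}$ and $T=-\dfrac{A}{F^\ast(P)F^\ast(Q)}$ with single polynomial numerators $A,B$ of degree $\le (m+n)s$, substitutes both at once into $\omega=F^\ast(PQ)\,dg+F^\ast(PQ)\,T\,\omega_0$, and obtains
\[
\omega=\frac{F^\ast(PQ)\,dB-B\,d(F^\ast(PQ))-A\bigl(qF^\ast(Q)\,dF^\ast(P)-pF^\ast(P)\,dF^\ast(Q)\bigr)}{F^\ast(PQ)}.
\]
Polynomiality of $\omega$ then forces the two divisibility conditions
\[
F^\ast(P)\mid B+qA,\qquad F^\ast(Q)\mid B-pA,
\]
which one solves linearly: $B+qA=(p+q)F^\ast(P)\,Q_1$ and $B-pA=(p+q)F^\ast(Q)\,P_1$ for some $P_1\in\mathcal P_{ms}$, $Q_1\in\mathcal P_{ns}$; back-substitution yields \eqref{22:20}. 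A posteriori one sees $B=pF^\ast(P)Q_1+qF^\ast(Q)P_1\in\langle F^\ast(P),F^\ast(Q)\rangle$, so your partial-fraction decomposition of $g$ does exist in the end---but establishing it requires precisely this divisibility argument, so it cannot be taken as the starting point.
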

\begin{proof}
By the Theorem \ref{4:28:05} there are polynomials $B,A$ of  degree at most $(m+n)s$ such that 
\[
\frac{\omega}{PQ}=d(\frac{B}{PQ})-(\frac{A}{PQ})(\frac{q.QdP-p.PdQ}{PQ})
\]
\begin{equation}\label{18:06:20:1}
\omega=\frac{QP.dB-Bd(PQ)-A(qQdP-p.PdQ)}{PQ}
\end{equation}
This implies that 
\[
P|B+qA, \ \ \ \ \ \ \ \ \ Q|B-pA\Rightarrow
\]
\[
B+qA=(p+q)P.Q_1,\ \ \ \ \ \ \ B-pA=(p+q)QP_1\Rightarrow
\]
\[
B=p PQ_1+qQP_1,\ \ \ \ \ \ A=-QP_1+ P.Q_1
\]
where $P_1,Q_1$ are two polynomials of respective degrees at most $ms,ns.$ 
Substituting these  in (\ref{18:06:20:1}) we get the result.
\end{proof}

In the sequel, for a form $\omega$
defined on $\proj$  we shall use the notation $\omega|$ of its restriction on the affine variety $\proj$.

\begin{coro} 
The morphism $F^*:H_f\to H_{F^{\ast}(f)}$ is injective and the image of the basis of $H_f$ is can be  extended to a basis of $H_{F^{\ast}(f)}$.
\label{Hfinj}
\end{coro}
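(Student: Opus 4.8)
The plan is to combine Theorem \ref{thmFast}, which describes $\ker(F_*)$ on homology, with the duality between the Brieskorn module $H_f$ and the first homology of the fibers, via period integrals. Recall that an element of $H_f$ is represented by a polynomial $1$-form $\omega$ (modulo relatively exact forms modulo $\mathcal{F}(\omega_0)$), and its pairing with a cycle $\delta_t$ in a generic fiber of $f$ is the period $\int_{\delta_t} \omega/(PQ)$; by Proposition \ref{3:53}, $\omega$ is relatively exact modulo $\mathcal{F}(\omega_0)$ — i.e. represents $0$ in $H_f$ — if and only if all these periods vanish. The same description applies to $H_{F^*(f)}$ with cycles in the fibers of $F^*(f)$. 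The basic compatibility is the change-of-variables identity $\int_{\gamma} F^*(\omega)/F^*(PQ) = \int_{F_*(\gamma)} \omega/(PQ)$ for any cycle $\gamma$ in a fiber of $F^*(f)$.

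First I would prove injectivity. Suppose $\omega$ represents a class in $H_f$ with $F^*(\omega) = 0$ in $H_{F^*(f)}$. Then for every cycle $\gamma$ in a generic fiber of $F^*(f)$ the period of $F^*(\omega)/F^*(PQ)$ over $\gamma$ vanishes, hence by the change-of-variables identity the period of $\omega/(PQ)$ over $F_*(\gamma)$ vanishes. Since $F_*$ is surjective onto $H_1(f^{-1}(b),\mathbb{Z})$ by Theorem \ref{thmFast}, every cycle $\delta \in H_1(f^{-1}(b),\mathbb{Z})$ arises as some $F_*(\gamma)$, so all periods of $\omega/(PQ)$ vanish, and by Proposition \ref{3:53} the form $\omega/(PQ)$ is relatively exact modulo $\mathcal{F}(\omega_0)$, i.e.\ $\omega = 0$ in $H_f$. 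This gives $\ker(F^*) = 0$.

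Next, for the extension statement, the strategy is to show that the image $F^*(H_f)$ is a direct summand of $H_{F^*(f)}$, equivalently that the quotient $H_{F^*(f)}/F^*(H_f)$ is free, so that a basis of the image extends to a basis of the whole. The dimension count from Theorem \ref{thmFast} and the displayed formulas \eqref{mu}, \eqref{mub} already tell us that $\mathrm{rank}\, H_{F^*(f)} = s^2 \mu_f + \rho_D$ while $\mathrm{rank}\, H_f = \mu_f$, so $F^*(H_f)$ is a proper subspace of the expected corank. I would argue that $F^*(H_f)$ is exactly the "diagonal'' copy of $H_f$ sitting inside the $s^2$-fold collection of subdiagrams $P_i$ of the Dynkin diagram of $F^*(f)$ described in the proof of Theorem \ref{thmFast}: each $P_i$ contributes an isomorphic copy of $H_f$, and $F^*$ lands along the simultaneous/diagonal image because $F$ is, locally away from $D$, an $s^2$-sheeted cover and $F^*(\omega)$ restricts on each sheet to (a copy of) $\omega$. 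Choosing, on the homology side, the $s^2-1$ difference classes \eqref{deltac} together with one copy of the $\mu_f$ cycles upstairs and the $\rho_D$ tangency/exceptional cycles \eqref{tangencycycles} gives a basis of $H_1((F^*(f))^{-1}(b),\mathbb{Z})$; dually, on the Brieskorn side, the $\mu_f$ forms $F^*(B)$ pair non-degenerately with the chosen copy of $H_f$ upstairs and can be completed by forms dual to the remaining basis cycles — in particular by the relatively exact (in the affine, non-$(PQ)$-weighted sense) correction forms $\omega_e$ of Section \ref{REx} that detect the tangency cycles. This produces the desired basis extension.

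The main obstacle I expect is making the last paragraph rigorous at the level of the Brieskorn modules rather than merely on homology: one must know that the period pairing $H_{F^*(f)} \times H_1((F^*(f))^{-1}(b),\mathbb{Z}) \to \mathbb{C}$ is a perfect pairing (after tensoring with $\mathbb{C}$, or over $\mathbb{Z}$ up to the relevant torsion), so that a basis of homology dualizes to a basis of the Brieskorn module and that $F^*(H_f)$ is saturated. This is where the genericity hypotheses on $P,Q,F$ and Conditions \ref{cond} must be invoked — to guarantee the fibers are connected, the Brieskorn modules are free of the expected rank, and no unexpected relations occur. Granting that perfectness (which is the content of the basis results assembled in Section \ref{PMBM} for $H_f$ and their pull-back in Theorem \ref{14:06}), the argument above closes: injectivity follows from surjectivity of $F_*$ plus Proposition \ref{3:53}, and the extension follows from the explicit diagonal-plus-differences decomposition of $H_1((F^*(f))^{-1}(b),\mathbb{Z})$ furnished by Theorem \ref{thmFast}.
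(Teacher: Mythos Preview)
Your injectivity argument is sound and more transparent than the paper's: the paper simply cites \cite[Proposition 1.1]{Hart} for injectivity, whereas you deduce it from surjectivity of $F_*$ on homology (Theorem \ref{thmFast}) together with the vanishing-of-periods criterion. Both are valid; yours makes the mechanism explicit.

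For the extension statement, however, your route differs substantially from the paper's and has a genuine gap. You argue fiberwise: for each regular $t$ the period pairing is nondegenerate, the cycles split as ``one copy of $H_1(f^{-1}(t))$ plus $\ker F_*$'', and $F^*(H_f)$ lands in the diagonal piece, so dually it should be a direct summand. The problem is that this is a statement about $\mathbb{C}$-vector spaces at each fixed $t$, whereas ``extends to a basis'' means extends to a $\mathbb{C}[t]$-basis of the free $\mathbb{C}[t]$-module $H_{F^*(f)}$. Pointwise nondegeneracy does not rule out that $F^*(\alpha_j)$ sits inside $t\cdot H_{F^*(f)}$ or some other non-saturated submodule; you need to control the $t$-dependence of the coefficients, and your invocation of ``perfectness'' at the end does not supply this.

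The paper closes exactly this gap by a degree count rather than by topology. It writes each basis element of $H_f$ as $m_j\eta$ with $\eta=axdy-bydx$, pulls back, and expands $F^*(\eta)=\sum_\ell g_\ell(F^*(f))\tilde\omega_\ell$ in a basis of $H_{F^*(f)}$ using Theorem \ref{14:06}. The key point is then the degree bound \eqref{degak}: since $\deg Z(F^*(\eta))\le s$ and $m>n\ge 2$, one gets $\deg g_\ell<1$, so every $g_\ell$ is a constant. Thus $F^*(\tilde\omega_j)$ is expressed with coefficients in $\mathbb{C}$, not merely in $\mathbb{C}[t]$, and this is what forces saturation. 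Your topological decomposition explains \emph{why} the ranks match and why injectivity holds, but to finish the $\mathbb{C}[t]$-module statement you still need a device like the degree estimate \eqref{degak}; the homological argument alone does not provide it.
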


\begin{proof} We consider the pull-back by $F$ of the projectivized 1-form ${\omega}$  for  $\omega| \in H_f.$
We restrict $F^*({\omega})$ on $\mathbb{C}^2=\{z=1\}$ in $H_{F^{\ast}(f)}$. It is well known that $F^*$ is injective (see \cite[Proposition 1.1]{Hart}). For each element $\tilde\omega_j|=m_j \eta| $ of the basis $H_f$, ${\tilde \omega_j}={m_j}(X,Y,Z){\eta}$ is a polynomial 1-form on $\proj$ and $F^*({\tilde \omega_j})|={m}_j (R,S,T)F^*({\eta})|$ where the form $\eta|=axdy-bydx$ such that  $d \eta| =  dx \wedge dy$.  According to Theorem \ref{14:06}, $F^*({\eta})|$ can be written as
\[F^*({\eta})|=\sum_{\ell =1}^\mu g_\ell (F^{\ast}(f) ) {\tilde \omega_\ell}| ,\]
where ${\tilde \omega_\ell}| $ is a basis of $H_{F^{\ast}(f)}.$ The inequality (\ref{degak}) entails
$$deg(g_\ell)\leq \frac{deg(F^*({\eta}))+n s-deg(Z({\tilde \omega_\ell}))-1}{ms.q}<1.$$ 
We recall  here that $ deg(F^*({\eta})) \leq s $ and $m > n \geq 2 $ by Condition \ref{cond},(3).

This means that the polynomial $g_\ell$ is in fact a constant for every $\ell$. Therefore  the 1-form
 $$ F^*({\tilde \omega_j})|= {m}_j(R,S,T)F^*({\eta})|$$ is free of $F^{\ast}(f)$. This terminates the proof.
\end{proof}

\section{Tangent vector}\label{TC}

We begin our discussion on the tangent vector of 
$\mathcal{M}(2,d)$  ($d=s(a+2)-2$) at the point $\mathcal F(F^*(\alpha_0)) \in \mathcal{I}(ms-1,ns-1)\cap\mathcal{P}(2,a,s)$, for $\alpha_0$ defined in \eqref{alpha0}.
First of all, we show the following lemma on a decomposition (\ref{19:31}) valid for
$\omega_1$  used to define the first Melnikov function \eqref{Mk}.

\begin{lem}\label{19:03} For $\omega_1$ in \eqref{Mk} we find
a homogeneous 1-form $\alpha$ on $\mathbb{P}^2$ of degree $a+1=m+n-1$ and two homogeneous polynomials $P_1,Q_1$ of respective degrees $ms,\ ns$ such that  
\begin{equation}\label{19:31}
 \omega_1=F^*( {\alpha})+ \omega_e, 
\end{equation}
where 
\begin{equation}\label{omegae}
 \omega_e=qF^{\ast}(Q)dP_1-pP_1dF^{\ast}(Q)-qQ_1dF^{\ast}(P) +  pF^{\ast}(P)dQ_1.
\end{equation}
\end{lem}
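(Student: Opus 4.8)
\textbf{Proof proposal for Lemma \ref{19:03}.}

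The plan is to extract from the vanishing condition $M_1(t) \equiv 0$ the information that a suitable correction of $\omega_1$ is relatively exact modulo the reference foliation, and then apply Corollary \ref{22:12} to obtain the explicit logarithmic shape \eqref{omegae}. First I would use the expression \eqref{Mk}, namely $M_1(t) = -t\int_{\delta_t}\frac{\omega_1}{F^\ast(PQ)}$, together with the fact (stated after \eqref{Mk}) that $M_1(t)$ vanishes identically for $t$ near $0$. This gives $\int_{\delta_t}\frac{\omega_1}{F^\ast(PQ)} = 0$ for the tangency vanishing cycle $\delta_t$ associated to the tangent critical point $p \in D$. By Theorem \ref{thmFast}, the kernel of $F_\ast: H_1((F^\ast(f))^{-1}(b),\Z)\to H_1(f^{-1}(b),\Z)$ is generated by the monodromy orbit of $\delta_t$; since the integral of $\frac{\omega_1}{F^\ast(PQ)}$ is a well-defined function on the homology (period), and vanishes on the monodromy orbit of $\delta_t$ (the integral over a monodromy-transported cycle is obtained by analytic continuation of the identically-zero function $M_1$, hence still zero), the period of $\frac{\omega_1}{F^\ast(PQ)}$ vanishes on all of $\ker(F_\ast)$.

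Next I would correct $\omega_1$ by a pull-back term so as to kill its periods on the complementary part of $H_1$. By Corollary \ref{Hfinj}, $F^\ast: H_f \to H_{F^\ast(f)}$ is injective and sends a basis of $H_f$ to part of a basis of $H_{F^\ast(f)}$; moreover $F_\ast$ is surjective. Hence I can choose a homogeneous 1-form $\alpha$ of degree $a+1 = m+n-1$ on $\mathbb{P}^2$ such that $\frac{\alpha}{PQ}$ has, on each $f$-fiber, the same periods that $\frac{\omega_1}{F^\ast(PQ)}$ has on the pushed-forward cycles $F_\ast(\gamma)$; equivalently, the periods of $\frac{\omega_1}{F^\ast(PQ)} - \frac{F^\ast(\alpha)}{F^\ast(PQ)}$ vanish on a set of cycles that, together with $\ker(F_\ast)$ (on which vanishing is already established), spans $H_1((F^\ast(f))^{-1}(b),\Z)$. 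Therefore $\frac{\omega_1 - F^\ast(\alpha)}{F^\ast(PQ)}$ has vanishing period over \emph{every} closed curve in every $F^\ast(f)$-fiber, so by Proposition \ref{3:53} it is relatively exact modulo $\mathcal{F}(\omega_0)$ with $\omega_0 = \frac{d(F^\ast f)}{F^\ast f}$.

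Finally I would invoke Corollary \ref{22:12}: set $\omega_e := \omega_1 - F^\ast(\alpha)$, which is a homogeneous polynomial 1-form with $\deg \mathcal{F}(\omega_e) = \deg \mathcal{F}(\omega_0)$ (this degree bookkeeping follows from $\deg(\omega_1)\le d+1$ and $\deg(\alpha) = a+1$, so that $\deg F^\ast(\alpha) = s(a+1)$ matches the pole/degree count), and whose division by $F^\ast(PQ)$ is relatively exact modulo $\mathcal{F}(\omega_0)$. Corollary \ref{22:12} then yields polynomials $P_1 \in \mathcal{P}_{ms}$, $Q_1 \in \mathcal{P}_{ns}$ with
\[
\omega_e = qF^\ast(Q)\,dP_1 - pP_1\,dF^\ast(Q) - qQ_1\,dF^\ast(P) + pF^\ast(P)\,dQ_1,
\]
which is exactly \eqref{omegae}, and \eqref{19:31} follows. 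The main obstacle I anticipate is the middle step: justifying that one can actually realize, by a single homogeneous $\alpha$ of the prescribed degree $a+1$, the required period data on the quotient $H_1((F^\ast(f))^{-1}(b))/\ker(F_\ast) \cong H_1(f^{-1}(b))$ — i.e. that the map $\alpha \mapsto (\text{periods of } \tfrac{\alpha}{PQ})$ surjects onto the relevant period space. This is where the global Milnor number computation and the fact that $H_f$ is generated in low degree (so that the degree bound $\deg(\alpha) = a+1$ suffices) are essential; one must check carefully that the dimension count from Theorem \ref{thmFast} and Corollary \ref{Hfinj} leaves no room for an obstruction, and that the degree constraints on $\omega_1$ and $F^\ast(\alpha)$ are compatible with the output degrees $ms, ns$ of $P_1, Q_1$ in Corollary \ref{22:12}.
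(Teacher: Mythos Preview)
Your outline matches the paper's approach exactly: kill the periods of $\tfrac{\omega_1}{F^\ast(PQ)}$ on $\ker(F_\ast)$ via Theorem \ref{thmFast}, construct $\alpha$ to absorb the periods on the quotient $H_1(f^{-1}(b),\Z)$, deduce relative exactness of $\tfrac{\omega_1-F^\ast(\alpha)}{F^\ast(PQ)}$, and finish with Corollary \ref{22:12}.

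The gap you flag is precisely where the paper does its real work, and it is \emph{not} settled by a dimension count. For existence of $\alpha$, the paper takes the de Rham representative $\alpha_b$ on each fiber $f^{-1}(b)$ given by duality, upgrades it to a polynomial form via the Atiyah--Hodge theorem, and analytically continues in $b$ to get a global section of the cohomology bundle of $f$. This section decomposes in $H_f$ as $\alpha| = \sum_\ell h_\ell(f|)\,\eta_\ell|$ with $h_\ell$ a priori only rational in $t$ (shown by Cramer's rule on the period matrix). To see that $\alpha$ is actually polynomial one clears denominators by some $K(t)\in\C[t]$ and then uses Corollary \ref{Hfinj}: since $\{F^\ast(\eta_\ell|)\}$ extends to a basis of $H_{F^\ast(f)}$, the identity $F^\ast(K)\,\omega_1| = \sum_\ell F^\ast(Kh_\ell)\,F^\ast(\eta_\ell|)$ in $H_{F^\ast(f)}$ forces divisibility $K\mid Kh_\ell$, i.e.\ each $h_\ell$ is polynomial. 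Finally, the degree constraint $\deg(\alpha)=a+1$ is obtained by applying the explicit bound \eqref{degak} of Theorem \ref{14:06} to the expansion of $\omega_1$ in the basis of $H_{F^\ast(f)}$: plugging in $\deg Z(\omega_1)\le d+1$ forces $\deg(h_\ell)=0$, so the $h_\ell$ are constants and $\alpha$ has degree $a+1$. In short, your surjectivity intuition is replaced in the paper by the concrete degree inequality \eqref{degak}; that is the missing ingredient you should supply.
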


\begin{proof}
In the affine coordinate the polynomial map $F$ introduces a morphism $F^*:H_f\rightarrow H_{F^{\ast}(f)}$
between two $\mathbb{C}[\tau]$-module $H_{f}$ and $\mathbb{C}[t]$-module $H_{F^{\ast}(f)}$.
The linear map 
\[
H_1(f^{-1}(b),\mathbb{Z})\rightarrow\mathbb{C} \ given \ by \ \ \Delta\rightarrow
\int_{F_*^{-1}(\Delta)}\frac{\omega_1}{F^{\ast}(P Q)}
\]
is well-defined  because $\int_\delta \frac{\omega_1}{F^{\ast}(P Q)}=0$, $\forall \delta \in {ker(F_*)}$ by virtue of Theorem \ref{thmFast}.

By the duality between de Rham cohomology and singular homology there is a $C^\infty$ differential form 
$\alpha_b$ in regular fiber $f^{-1}(b)$ such that 
\[
\int_{F_*^{-1}(\Delta)}\frac{\omega_1}{F^{\ast}(PQ)}=\int_{\Delta}\frac{\alpha_b}{PQ}.
\]
According to Atiyah-Hodge  theorem  (See \cite[Theorem 4]{AH}, \cite[Chapter 4]{M-VL}) $\alpha_b$ can be taken holomorphic thus polynomial.
The analytic continuation of  $\alpha_b$ with respect to the parameter $b \in \C \setminus C_f$ gives rise to a holomorphic global section $\alpha$ of cohomology bundle of $f$.
Thus in the affine coordinate $\mathbb{C}^2 \subset \proj$ we  have the following decomposition in $H_f$ 
\begin{equation}\label{ahf}
\alpha|=\sum_{ \ell=1}^{\mu_f} h_{\ell} (f|)\eta_{\ell}| 
\end{equation} 
where $h_{\ell}(\tau)$ is holomorphic  in $ \tau \in \mathbb{C}\setminus C_f.$

The coefficients $h_{\ell}(\tau)$ in \eqref{ahf}  are rational functions in $\tau$ because of the following relation
\[
\begin{bmatrix}
  h_{1} (\tau)\\
   \vdots  \\
   h_{{\mu_f}}(\tau)
\end{bmatrix}
=
\begin{bmatrix}
  \int_{\delta_k}\eta_{\ell}| 
\end{bmatrix}_{\mu_f\times\mu_f}^{-1}
\begin{bmatrix}
  \int_{\delta_1}\alpha| \\
   \vdots  \\
    \int_{\delta_{\mu_f}}\alpha|
\end{bmatrix}
\]
All the elements of the matrices in the right side of the equality have finite growth at critical values. This is an analogy of the argument used to show \eqref{23:28} with the aid of Cramer's rule.

Pull-back of forms $\eta_{\ell}|, \forall \ell$ are independent in $H_f$ under the map $F^*$ and can be extended to a basis for $H_{F^{\ast}(f)}$ in view of Corollary  \ref{Hfinj}.

There is a polynomial $K(\tau)\in \mathbb{C}[\tau]$ such that $K(f|).\alpha|$ be a holomorphic form. We can write $K(f|).\alpha| =\sum_{\ell} {h'}_{\ell}(f|)\eta_{\ell}|$ 
then $F^*(K)\omega_1|-F^*(K.\alpha|)=0$ in $H_{F^{\ast}(f)}$.

Now we shall show the Claim: $\omega_1|-F^*(\alpha|)=0$ in $H_{F^{\ast}(f)}.$ 
The set  $\{ F^*(\eta_{\ell}|) \}_{\ell=1}^{\mu_f}$  can be extended to a basis of $H_{F^{\ast}(f)}$ by  Corollary  \ref{Hfinj}.
So we have in $H_{F^{\ast}(f)}$
\begin{eqnarray}\label{7:26}
F^\ast(K)\omega_1|=\sum_{\ell=1}^{\mu_f}F^\ast(K). {h'}_{\ell}(F^{\ast}(f|))F^*(\eta_{\ell}|) +\sum_{\sigma= \mu_f+1}^\mu F^*(K)a_{\sigma}\tilde{\eta}_{\sigma}|
\end{eqnarray}
Here  $\{\tilde{\eta}_{\sigma}\}_{\sigma= \mu_f+1}^\mu$ is a basis of  $H_{F^{\ast}(f)}$ alien to  $F^\ast(H_f).$

Since each element of $H_{F^{\ast}(f)}$ can be uniquely written as a linear combination of the elements in this basis we get the vanishing coefficients $a_{\sigma}=0$ for all $\sigma$. In other words, in view of \eqref{ahf}, we have  $F^*(K). {h}_{\ell}=F^*( {h'}_{\ell})$ hence $K|{h'}_{\ell}$.  This means that $\omega_1|-F^*(\alpha|)=0$ in $H_{F^{\ast}(f)}$. 
 This is nothing but the Claim in question.

To find the degree of $\alpha$, we write  \[\omega_1=\sum_{l}F^{*}( h_l \eta_l)=\sum_{\ell} h_l(F(f))\sum_{\beta}g_{\beta}^{l}(F(f))\eta_\beta
=\sum_{\ell, \beta }F^\ast(g_{\beta}^{l}h_l)\eta_\beta,\] 
and we conclude that  $deg(h_l)=0$ by virtue of  Theorem \ref{14:06}. Therefore $deg(\alpha)=a+1$.

Thanks to the Claim, we have 
\begin{equation}\label{omegakFalpha}
\int_{\delta} \frac{\omega_1-F^*(\alpha)}{F^{\ast}(P Q)}=0, \ \ \ \forall \ \delta\in H_1((F^{\ast}(f))^{-1}(b),\mathbb{Z}),
\end{equation}
which implies that the integrand rational form of \eqref{omegakFalpha}  is a relatively exact 1-form modulo the foliation $F^*(\omega_0)$ for \eqref{omega0}.  By Corollary \ref{22:12} there is a 1-form $\omega_e$ of the form (\ref{22:20}) such that $\omega_1=F^*(\alpha)+\omega_e. $ In fact there are polynomials $P_1$ and $Q_1$ with degree $ms$ and $ns$ respectively such that $ \omega_e=qF^{\ast}(Q)dP_1-pP_1dF^{\ast}(Q)- qQ_1dF^{\ast}(P)+pF^{\ast}(P)dQ_1$.
\end{proof} 
We know that there are rational function $ \tilde h_1$ and a $1-form$  $\beta_1$ on $\mathbb{P}^2$ such that  
\begin{equation}\label{omegaePQ}
\frac{{\omega}_e}{ F^{\ast}(P Q)}=\frac{\beta_1 +\tilde h_1 F^\ast (\alpha_0)}{F^{\ast}(P Q)  }.
\end{equation}

\begin{lem}\label{correctionterm}
The 1-form $ \omega_e$ in the equality (\ref{19:31}) is  of the form

\begin{equation}\label{19:45}
 \omega_e=qF^{\ast}(Q)dP_1-pP_1dF^{\ast}(Q)- qQ_1dF^{\ast}(P) + pF^{\ast}(P)dQ_1
\end{equation}
with $P_1 =q <F_1 , F^\ast (grad P) >, $ $ Q_1= p<F_1 , F^\ast (grad Q) >$
for a vector 
\begin{equation}\label{vecV1} 
F_1 = (R_1, S_1, T_1),
\end{equation}
defined by some  homogeneous polynomials $R_1,S_1,T_1 \in \C[x,y,z]^h_s.$ 
\end{lem}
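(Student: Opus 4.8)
The starting point is Lemma \ref{19:03}, which already supplies the decomposition $\omega_1 = F^*(\alpha) + \omega_e$ with $\omega_e$ of the shape \eqref{omegae} for some homogeneous polynomials $P_1, Q_1$ of degrees $ms, ns$. The task here is to identify these polynomials concretely as the directional-derivative expressions $P_1 = q\langle F_1, F^*(\operatorname{grad} P)\rangle$ and $Q_1 = p\langle F_1, F^*(\operatorname{grad} Q)\rangle$. The natural route is to compare $\omega_e$ (equivalently $\omega_1 - F^*(\alpha)$) with the explicit deformation term $\omega_W$ computed in Remark \ref{hM}: there, for the logarithmic seed $\alpha_0 = qQdP - pPdQ$, one has $\omega_W = \omega_{pl} + F^*(\alpha_1)$ with $\omega_{pl} = qF^*(Q)dP_1 - pP_1 dF^*(Q) - qQ_1 dF^*(P) + pF^*(P)dQ_1$ where $P_1 = R_1 F^*(P_X) + S_1 F^*(P_Y) + T_1 F^*(P_Z) = \langle F_1, F^*(\operatorname{grad} P)\rangle$ and likewise $Q_1 = \langle F_1, F^*(\operatorname{grad} Q)\rangle$. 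So the required identification is exactly the claim that, up to a pull-back correction absorbed into the $F^*(\alpha)$ term, the $\epsilon$-linear part $\omega_1$ of a deformation inside $\mathcal X(a,s)$ coming from moving the morphism $F$ to $F_\epsilon = F + \epsilon F_1 + O(\epsilon^2)$ produces precisely this $\omega_{pl}$.

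\textbf{Key steps.} First I would recall from \eqref{11:03:13}–\eqref{16:48} that pulling back along $F_\epsilon$ contributes the term $\omega_W$, and specialize $A, B, C$ to the logarithmic choice coming from $\alpha_0$ as in Remark \ref{hM}, so that the morphism-variation part of $\omega_1$ equals $\omega_{pl}$ with the stated $P_1, Q_1$ modulo a genuine pull-back $F^*(\alpha_1)$. Second, I would invoke Lemma \ref{19:03}: its conclusion says $\omega_1 = F^*(\alpha) + \omega_e$ with $\omega_e$ of shape \eqref{omegae}, and the uniqueness of this decomposition — guaranteed because $\{F^*(\eta_\ell)\}$ extends to a basis of $H_{F^*(f)}$ by Corollary \ref{Hfinj}, i.e. the pull-back part and the "alien" part $\omega_e/F^*(PQ)$ do not interfere in $H_{F^*(f)}$ — forces the $P_1, Q_1$ produced abstractly in Lemma \ref{19:03} to coincide (as classes, hence after the degree count as honest polynomials) with the concrete directional derivatives $q\langle F_1, F^*(\operatorname{grad} P)\rangle$ and $p\langle F_1, F^*(\operatorname{grad} Q)\rangle$. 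Third, I would use \eqref{omegaePQ}, which rewrites $\omega_e/F^*(PQ)$ as $(\beta_1 + \tilde h_1 F^*(\alpha_0))/F^*(PQ)$, together with the relatively-exact-form machinery of Section \ref{REx} (Corollary \ref{22:12}) to pin down that the only freedom left in $(P_1, Q_1)$ is precisely the freedom in $F_1$; a direct expansion of $d(F^*(P)) = \sum F^*(P_X)dR + \cdots$ then shows $\langle F_1, F^*(\operatorname{grad} P)\rangle$ is the coefficient that appears.

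\textbf{Main obstacle.} The delicate point is the rigidity/uniqueness argument: I need that once $\omega_1$ is written as "pull-back plus $\omega_e$", the polynomials $P_1, Q_1$ in $\omega_e$ are determined, not just up to the kernel of $F^*$ on Brieskorn modules but genuinely, so that matching them against the morphism-variation computation in Remark \ref{hM} is legitimate. This requires carefully tracking that the decomposition of Lemma \ref{19:03} and the decomposition coming from varying $F$ are the \emph{same} decomposition — i.e. that any ambiguity in $(\alpha, P_1, Q_1)$ is killed by the combination of the degree bounds (forcing $\deg \alpha = a+1$, $\deg P_1 = ms$, $\deg Q_1 = ns$) and the injectivity in Corollary \ref{Hfinj}. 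The degree and dimension bookkeeping, though routine, has to be done cleanly; but the conceptual heart is transporting the abstract $\omega_e$ to the explicit $\omega_{pl}$ via the deformation of the morphism, and that is where the Brieskorn-module basis extension result does the real work.
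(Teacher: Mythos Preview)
Your proposal has a genuine logical gap: it is circular. The deformation $\omega_\epsilon$ in \eqref{11:31} is an \emph{arbitrary} curve in $\mathcal{X}(a,s)$ through $\mathcal{F}_0$; it is \emph{not} assumed to come from varying the morphism $F$ to some $F_\epsilon = F + \epsilon F_1 + \cdots$. The whole point of Lemma~\ref{correctionterm} (together with Theorem~\ref{8:34:21}) is to prove that every such $\omega_1$ nevertheless has the shape of a tangent vector to $\mathcal{P}(2,a,s)$. Your plan ``compute $\omega_W$ from Remark~\ref{hM}, compute $\omega_1 = F^*(\alpha) + \omega_e$ from Lemma~\ref{19:03}, then match by uniqueness'' presupposes that $\omega_1$ is already of the form $\omega_W$, i.e.\ that an $F_1$ exists --- which is exactly the conclusion to be established. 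Uniqueness of the decomposition in $H_{F^*(f)}$ tells you only that the abstract pair $(P_1,Q_1)$ from Corollary~\ref{22:12} is well-determined; it does not tell you that this pair lies in the image of the ``directional derivative'' map $F_1 \mapsto (q\langle F_1, F^*(\operatorname{grad}P)\rangle,\, p\langle F_1, F^*(\operatorname{grad}Q)\rangle)$.

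The paper's proof supplies the missing construction of $F_1$ by a purely algebraic argument you do not mention. Wedging \eqref{omegaePQ} with $F^*(\alpha_0)$ shows that $(-Q_1F^*(P)+P_1F^*(Q))\cdot J \subset I_1$, where $I_1 = \langle F^*(\lambda_1),F^*(\lambda_2),F^*(\lambda_3)\rangle$ is generated by the pulled-back coefficients of $\alpha_0$ and $J$ is the ideal of $2\times 2$ minors of $J_F$. Genericity of $F$ gives $V(J)\cap V(I_1)=\emptyset$, hence $I_1\cap J = I_1\cdot J$, and one concludes $-Q_1F^*(P)+P_1F^*(Q)\in I_1$. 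This ideal membership is precisely what produces $R_1,S_1,T_1\in\C[x,y,z]^h_s$ with $-Q_1F^*(P)+P_1F^*(Q) = R_1F^*(\lambda_1)+S_1F^*(\lambda_2)+T_1F^*(\lambda_3)$; coprimality of $P,Q$ then splits this into the claimed expressions for $P_1,Q_1$. None of your three ``key steps'' contains this ideal-theoretic step, and without it you have no way to manufacture $F_1$ from the abstractly given $(P_1,Q_1)$.
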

\begin{proof}
First of all we introduce the following polynomials $\{\lambda_j (X,Y,Z)\}_{j=1}^3$
defined by the relation 
\begin{equation}\label{lambda123}
\alpha_0 =  qQdP -pP dQ  =  \lambda_1 dX + \lambda_2 dY + \lambda_3 dZ.
\end{equation}

Secondly, we define polynomials $\{\rho_j (x,y,z)\}_{j=1}^3$ by means of \eqref{JF}, \eqref{lambda123},
\begin{equation}\label{rho123}
(F^\ast(\lambda_1), F^\ast(\lambda_2), F^\ast(\lambda_3)).J_F(x,y,z)= ( \rho_1, \rho_2, \rho_3).
\end{equation}
In other words, $ F^\ast(\alpha_0) = \rho_1 dx +  \rho_2 dy +  \rho_3 dz. $

Now we pass to the investigation of polynomials $P_1, Q_1$ present in \eqref{omegae}.
By multiply the relation \eqref{omegaePQ} with $F^*(\alpha_0)$ we get the following equality: 
\begin{equation}\label{P1Q1Eqn}
\begin{split}
(-Q_1F^{\ast}(P)+P_1F^{\ast}(Q) ) F^\ast (dP\wedge dQ)= 
(pq)^{-1}\left( \beta_1-qF^{*}(Q)dP_1+pF^{*}(P)dQ_1\right)\wedge F^{*}(\alpha_0).
\end{split}
\end{equation}

Now let us consider the ideals in $\C[x,y,z]$
\[   I_1=<F^\ast(\lambda_1), F^\ast(\lambda_2), F^\ast(\lambda_3)>, \;\;\;\;   I = <  \rho_1, \rho_2, \rho_3> \] 
 for polynomials from \eqref{lambda123}, \eqref{rho123}.
 We also consider the ideal $J=<J_F(j,\ell)>_{1 \leq j, \ell \leq 3}\subset\mathbb{C}[x,y,z]$ generated by  $2\times 2$ minors of 
$J_F(x,y,z)$ \eqref{JF}.

Since $V(J)\cap V(I_1)=\emptyset$ we see that  $I_1+J=\mathbb{C}[x,y,z]$ and $I_1\cap J= I_1.J.$ 

The equality  \eqref{P1Q1Eqn} entails that $ (-Q_1F^{\ast}(P)+P_1F^{\ast}(Q)) . J \in I \subset I_1.$ This means that $  (-Q_1F^{\ast}(P)+P_1F^{\ast}(Q)) . J \in I_1 \cap J = I_1.J$ thus $ (-Q_1F^{\ast}(P)+P_1F^{\ast}(Q)) \in I_1. $
In other words, there exist polynomials $R_1,S_1,T_1$ of degree $s$ such that 
\begin{equation}\label{F1R1}
\begin{split}
-Q_1F^{\ast}(P)+P_1F^{\ast}(Q)&=R_1 F^{\ast}(\lambda_1) + S_1 F^{\ast}(\lambda_2) +T_1 F^{\ast}(\lambda_3).
\end{split}
\end{equation}
Since $P,Q$ are co-prime we have the required expressions for $P_1, Q_1$ \eqref{19:45}:

\begin{equation}\label{P1Q1}
\begin{split}
P_1 &= q(R_1.F^\ast(P_X)+S_1.F^\ast(P_Y)+T_1.F^\ast(P_Z)),\\ 
Q_1&= p(R_1.F^\ast(Q_X)+S_1.F^\ast(Q_Y)+T_1.F^\ast(Q_Z)).
\end{split}
\end{equation} 
\end{proof}

Now we proceed to the proof of Theorem \ref{8:34:21}.
\begin{proof}
 Let us introduce the notation $P_0=qP, \ Q_0=pQ.$ 
We see that the 1-form $F^{*}(qQ_0dP_0-pP_0dQ_0)$ defines the foliation $\mathcal{F}(\alpha_0).$  With this notation and \eqref{10:16}, \eqref{P1Q1}  we have
$ P_1  = <F_1, F^\ast(grad \; P_0)>,   Q_1  = <F_1, F^\ast(grad \; Q_0)>$.
 
The $\epsilon^1$ part of the numerator of the rational form
$$d\left(\frac{(P_0+ \epsilon P_1)^{q}}{(Q_0-\epsilon Q_1)^{p}}\right )$$
gives rise to 
$$ pq (q Q_0dP_1 - pP_1 dQ_0 - qQ_1 dP_0 +  pP_0d Q_1 ) =$$
\begin{equation}\label{3:32}
\begin{split}
 qQ d( <F_1, F^\ast(grad \; P)>) &- p  <F_1, F^\ast(grad \; P)> dQ\\
 - q <F_1, F^\ast(grad \; Q)> dP &+ p P d(<F_1, F^\ast(grad \; Q)>). 
\end{split}
\end{equation} 

If we consider the 1-form \eqref{19:45} in replacing $(P,Q)$ by $(P_0,Q_0)$  and divide it by $pq$, then the result will concide with
 \eqref{3:32}. This implies that every tangent vector $\omega_1$   to ${\mathcal  X}(a,s)$ at $\mF_0$  from \eqref{11:31} can be interpreted as a tangent vector $\omega_W$ \eqref{omegaW} to  ${\mathcal P}(2,a,s)$ at the same point. 
\end{proof}



 \section{Brieskorn/Petrov  module}\label{PMBM}

In this section we establish a new formulation of results concerning the Brieskorn/Petrov module
defined for a rational function of type \eqref{PQfirstintegral}. This generalization furnishes us with the proof of Theorem
\ref{14:06} that is necessary to establish Corollary \ref{Hfinj} that in its turn  plays an essential r\^ole during the proof of  the key Lemma  \ref{19:03}.

Let us consider a rational  function $f:\mathbb{P}^2\setminus\mathcal{R}\to \mathbb{C}$ as in (\ref{PQfirstintegral}) satisfying Conditions \ref{cond}. Further in this section, we regard   $f=\frac{P^q}{Q^p}$ with $P, Q \in\mathbb{C}[X,Y]$
defined on $\C^2$. In other words in the sequel $P=P(X,Y,1)$, $Q =Q(X,Y,1)$ in terms of polynomials in (\ref{PQfirstintegral}). This reduction is possible due to the fact that $f$ is transversal to $Z=0$.
We recall that $D:=f^{-1}(\infty) = V(Q)$ is smooth due to the Condition \ref{cond}, (1). Let  $\Omega^i(*D)$ be the set of rational  $i$-forms on $\mathbb{P}^2$ with poles of arbitrary order along $D$.  Let $t$ be an affine coordinate of $\mathbb{C}=\mathbb{P}^1\setminus\{\infty\}$. The set $\Omega^i(*D)$ can be regarded as a  $\mathbb{C}[t]$-module according to  the following identification: 
$$p(t).\omega=p(f)\omega, \ \ \ \omega\in \Omega^i(*D) $$
 for $p(t)\in \mathbb{C}[t].$
Any $i$-form $\omega\in\Omega^i(*D)$ can be considered as a polynomial $i$-form in three  variables $X,Y,\zeta$ 
with $d\zeta=-\zeta^2dQ$. 
 The $\mathbb{C}[t]$-module of relative rational  2-forms with poles of arbitrary order is defined as follows:
\[\Omega_{\mathbb{P}^2/\mathbb{P}^1}^2(*D)=\frac{\Omega^2(*D)}{df\wedge\Omega^1(*D)}.\]
 One can find a $\mathbb{C}[t]$-module injective and surjective homomorphism from $\Omega_{\mathbb{P}^2/\mathbb{P}^1}^2(*D)|_{\C^2}$  to the following
  $\mathbb{C}[t]$-module with quotient ring structure
\begin{equation}\label{15:39}
\mM(*D):= \frac{\mathbb{C}[X,Y,\zeta]}{I}
\end{equation}
defined for the ideal
\begin{equation}\label{15:391}
I = <\ \zeta.Q-1 ,\ qP_X-p \zeta.Q_X.P,\ qP_Y-p\zeta.Q_Y.P>.
\end{equation}
We remark here that  the element $Q$ is invertible in the quotient ring $\mM(*D).$ 

\begin{prop} $\Omega_{\mathbb{P}^2/\mathbb{P}^1}^2(*D)$ has a structure of vector space of dimension $\mu_f=(n+m-1)^2-nm$ where $\mu_f$ is the global Milnor number of $f$. 
\label{globalMilnor}
\end{prop}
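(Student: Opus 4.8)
The plan is to reduce the statement to a length computation for an explicit $0$-dimensional quotient ring and then evaluate that length by intersection theory on $\proj$, using the genericity hypotheses of Conditions \ref{cond}. First I would invoke the $\mathbb{C}[t]$-module isomorphism recorded just above (together with the transversality of $f$ to the line $\{Z=0\}$ from Conditions \ref{cond}, (1), which lets one pass freely between $\Omega^2_{\mathbb{P}^2/\mathbb{P}^1}(*D)$ and its restriction to $\C^2$) to replace $\Omega^2_{\mathbb{P}^2/\mathbb{P}^1}(*D)$ by $\mM(*D)=\mathbb{C}[X,Y,\zeta]/I$. Eliminating $\zeta$ through the relation $\zeta Q=1$ exhibits $\mM(*D)$ as the localized ring $\mathbb{C}[X,Y]_{Q}/\langle g_1,g_2\rangle$, where $g_1=qQP_X-pPQ_X$ and $g_2=qQP_Y-pPQ_Y$ are the affine coefficients of $\alpha_0=qQ\,dP-pP\,dQ$ from \eqref{alpha0}; these have degree $n+m-1$, and under Conditions \ref{cond} the coefficients $\lambda_1,\lambda_2,\lambda_3$ of $\alpha_0$ have no common factor, so $\langle g_1,g_2\rangle$ has finite colength and $\mM(*D)$ is a finite-dimensional $\mathbb{C}$-vector space. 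It therefore remains to count, with multiplicity, the zeros of $\alpha_0$ that lie in $\C^2$ and off $V(Q)$.

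Next I would carry out this count via Bézout's theorem applied to the two plane curves $V(\lambda_1),V(\lambda_2)\subset\proj$, each of degree $n+m-1$, whose total intersection number is $(n+m-1)^2$. The part of this cycle that must be discarded when one localizes at $Q$ is the part supported on $V(Q)\cap\C^2$; by Conditions \ref{cond}, (1) the curves $V(P),V(Q)$ are smooth and meet transversally, and the only zeros of $\alpha_0$ on $V(Q)$ are the base points $\mathcal{R}=V(P)\cap V(Q)$ (a zero on $V(Q)\setminus V(P)$ would force $Q_X=Q_Y=0$ on a smooth curve, which is impossible by Euler's identity). There are exactly $nm$ such points, and at each of them local analytic coordinates with $P=u$, $Q=v$ put $\alpha_0$ in the normal form $q\,v\,du-p\,u\,dv$, whose zero scheme is reduced of length $1$; hence $\mathcal{R}$ contributes precisely $nm$ to the intersection cycle. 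Combining this with a careful analysis of the remainder of the cycle — the critical points of $f$ in $\C^2$, all simple by Conditions \ref{cond}, (2), together with the intersection of $V(\lambda_1)$ and $V(\lambda_2)$ along the line at infinity — one obtains $\dim_{\mathbb{C}}\mM(*D)=(n+m-1)^2-nm$.

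Finally I would identify the integer $(n+m-1)^2-nm$ with the global Milnor number $\mu_f$. This amounts to comparing the algebraic module $\Omega^2_{\mathbb{P}^2/\mathbb{P}^1}(*D)$ with the topology of $f$: the pencil $\{P^q-tQ^p=0\}$ resolved along its base scheme is a fibration of a rational surface over $\mathbb{P}^1$, and an Euler-characteristic computation — with Conditions \ref{cond} controlling the singular fibers, the special fibers over $0$ and $\infty$, and the behaviour at the line at infinity — recovers $\operatorname{rank} H_1$ of a generic fiber as the same number; alternatively one cites the corresponding statement for Brieskorn/Petrov modules from the literature referenced in the paper. I expect the genuine work to be concentrated in the second step: pinning down the intersection of $V(\lambda_1)$ and $V(\lambda_2)$ at the line at infinity and verifying, via the genericity Conditions, that passing to the localization at $Q$ subtracts exactly the base-locus contribution $nm$ and nothing more, so that the bookkeeping closes on the value $(n+m-1)^2-nm$.
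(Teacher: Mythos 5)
Your route is genuinely different from the paper's: the paper disposes of Proposition \ref{globalMilnor} by citing \cite{Br-inv} and \cite[Corollary 1.1]{Mo3} for the identification of $\dim\Omega^2_{\mathbb{P}^2/\mathbb{P}^1}(*D)$ with the sum of the local Milnor numbers of $f$, adding only the remark $|\mathcal{R}|=nm$, whereas you attempt a self-contained evaluation by intersection theory. Within that plan, your reduction to $\mM(*D)\cong\mathbb{C}[X,Y]_{Q}/\langle g_1,g_2\rangle$ and your verification that each of the $nm$ base points contributes exactly $1$ to the cycle $V(\lambda_1)\cdot V(\lambda_2)$ (via the local normal form $qv\,du-pu\,dv$) are correct.

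The gap sits exactly at the step you defer, and the bookkeeping does not close in the direction you expect. The ring $\mathbb{C}[X,Y]_{Q}/\langle g_1,g_2\rangle$ is supported on $\C^2\setminus V(Q)$, so its dimension is only the part of the B\'ezout cycle lying there, namely the affine critical locus of $f$; it is blind to the part of $V(\lambda_1)\cap V(\lambda_2)$ sitting on the line $Z=0$. That part is nonempty: since $qm=pn$ the Euler relation $X\lambda_1+Y\lambda_2+Z\lambda_3\equiv 0$ holds, so on $Z=0$ every zero of $\lambda_1$ other than $[1:0:0]$ is automatically a zero of $\lambda_2$. These $m+n-2$ points (counted with multiplicity) are the tangencies of the non-invariant line $Z=0$ with the degree-$(m+n-2)$ foliation $\mathcal{F}(\alpha_0)$; at them $\lambda_3\neq 0$, so they are neither critical points of $f$ nor excluded by Conditions \ref{cond}. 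Your count therefore yields $\dim\mM(*D)=(n+m-1)^2-nm-(m+n-2)$, i.e.\ the number $d^2+d+1-nm$ of singularities of $\mathcal{F}(\alpha_0)$ off the base locus (with $d=m+n-2$), rather than the stated $(n+m-1)^2-nm$. To reach the Proposition's value along your route you would have to show either that the intersection at infinity is empty (it is not) or that $\Omega^2_{\mathbb{P}^2/\mathbb{P}^1}(*D)$, whose forms are constrained to be regular along $Z=0$ away from $D$, strictly exceeds the affine localization by exactly these $m+n-2$ dimensions --- an option that your first step, which identifies the two modules, rules out. The concluding identification with ${\rm rank}\, H_1(f^{-1}(b),\Z)$ inherits the same difficulty: because of the multiple fibres of $f$ over $0$ and $\infty$, that rank, the number of affine critical points, and $(n+m-1)^2-nm$ are three a priori distinct quantities, so the proposed Euler-characteristic computation is not guaranteed to return the third one.
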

\begin{proof}
According to \cite{Br-inv}, \cite[Corollary 1.1]{Mo3},  $\Omega_{\mathbb{P}^2/\mathbb{P}^1}^2(*D)$ is a vector space with dimension $\mu_f:$ the sum of local Milnor numbers of $f$.
We remark here that the cardinality of the set  ${\mathcal R} = V(P) \cap V(Q)$  is equal to $nm.$
\end{proof}

We can prove the Proposition \ref{globalMilnor} by the aid of the Gr\"obner basis.  In fact by using the graded lex order on $\mathbb{C}[X,Y,\zeta]$ one can find a Gr\"obner basis $\tilde I$ for the ideal $I$ \eqref{15:391} and then by considering the leading part of  $\tilde I$ the basis of  $\mM(*D)$ is obtained. Indeed this basis depends on $\zeta$ but by changing the basis we can write $\zeta$ as a polynomial in variables $X,Y$ because $\zeta$ is an invertible. 

By the Condition \ref{cond}, (1)  
we have
\begin{equation}
\mM(*D) \cong \sum_{j=1}^{\mu_f}  \C. m_j(X,Y) 
\label{mk}
\end{equation}
for $m_j(X,Y) \in \C[X,Y].$ 

We see that the concrete calculation of the monomials  $m_j(X,Y)$ in \eqref{mk} can be done with the following  example that essentially covers all necessary cases \eqref{PQfirstintegral} under the Conditions \ref{cond}.  Namely these conditions mean that the Newton polyhedron of
the polynomial $P(X,Y)$ (resp. $Q(X,Y)$) is a triangle with vertices $\{ (0,0), (m,0), (0,m) \}$ (resp.  $\{ (0,0), (n,0), (0,n) \}$ ) with non-degenerate condition on the edge $s (m,0)+(1-s) (0,m), s \in [0,1] $ (resp. $  s (n,0)+(1-s) (0,n), s \in [0,1] $).

Consider two generic  polynomials of respective  degrees $n,m$ of the form $P=p_1X^m+p_2Y^m$ and $Q=q_1X^n+q_2Y^n$ where $P,Q$ are co-prime. The module  $\mM(*D)$ is generated by
\[
X^iY^\ell, \  \  \  s.t \   \  \  \  X^{m-1} Y^{n-1} \not| X^iY^\ell   \ , \  \  \  \  0\leq i,\ell \leq (m+n-2),
\]
\begin{figure}[htp]
\centering
\includegraphics[width=2.7in]{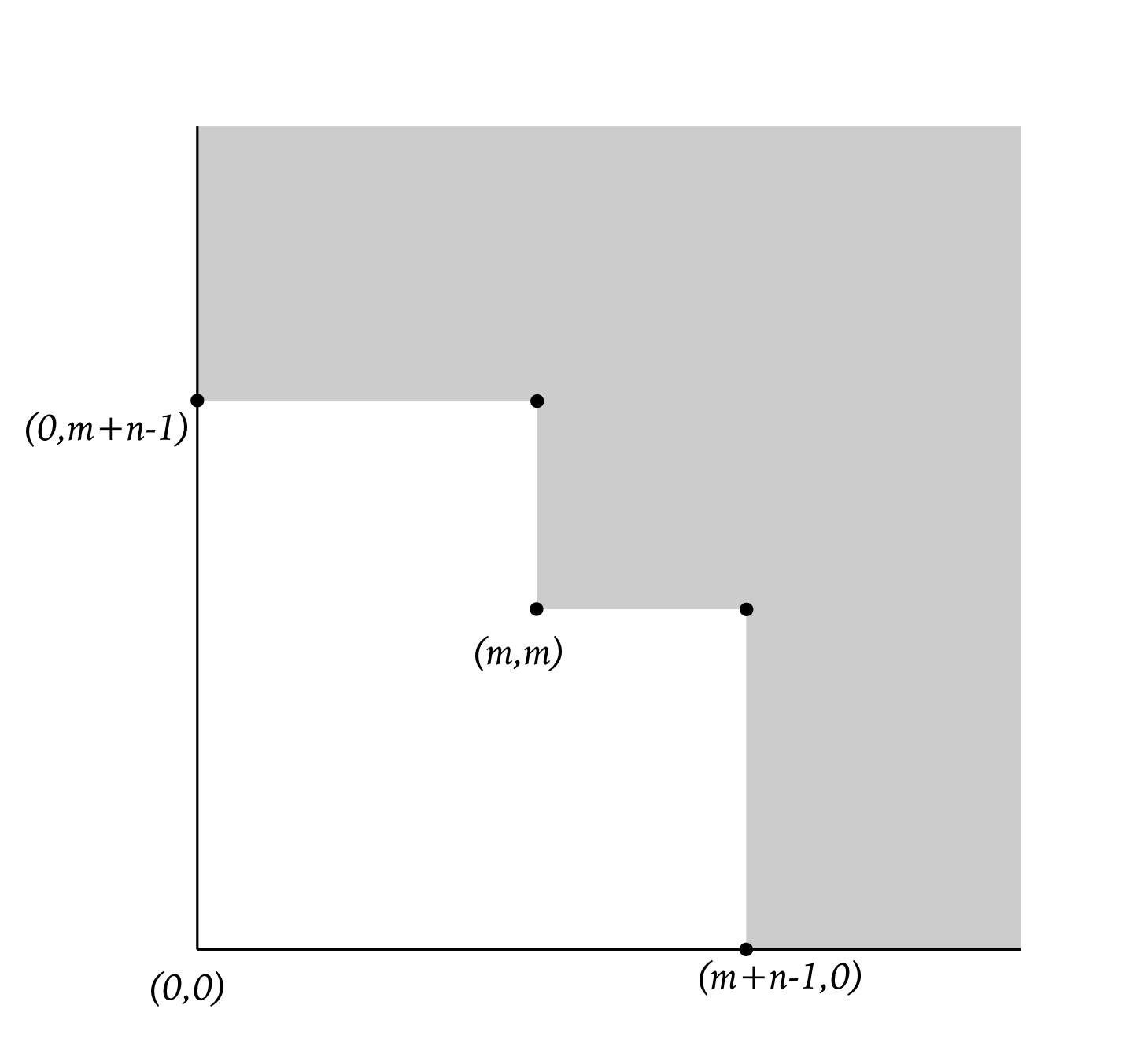}
\caption{$(i,j)\leftrightarrow X^iY^\ell$}
\label{mbasis}
\end{figure}

 Thus we get monomials $m_j(X,Y)$ for $j = 1, \cdots, \mu_f=(n+m-1)^2-nm.$ 

See Figure \ref{mbasis}.

Now let us define the relative cohomology associated to $f$ that is endowed with $\mathbb{C}[t]$-module structure 
\begin{equation}
H_f:=\frac{\Omega^1(*D)}{df\wedge\Omega^0(*D)+d\Omega^0(*D)}.
\label{BPmodule}
\end{equation}
where $t=f$ in the affine coordinate $\mathbb{C}$.
This module is called {\it Brieskorn $\mathbb{C}[t]$-module} (or {\it Petrov module} according to \cite{L.G}).
It is worthy noticing that the denominator of (\ref{BPmodule}) represents the space of relatively exact rational 1-forms modulo
 $\mathcal{F}(\omega_0)$ for \eqref{omega0}.

We define the degree of the zero divisor $Z(\eta)$ of a rational form $$ \eta = \frac{P^1_\eta (X,Y) dX + P^2_\eta(X,Y) dY }{Q^\ell}, \ell \geq 0$$ for $Q \not|  P^1_\eta,  P^2_\eta$ as follows
\begin{equation}
deg Z(\eta) = max\{deg P^1_\eta (X,Y),deg P^2_\eta(X,Y)\}
\label{degZ}
\end{equation}
 
 Now we state the following (see \cite[Theorem 10.12.1]{Mo6}):

\begin{theo}\label{14:06}
The $\mathbb{C}[t]$-module $H_f$ is free and finitely generated by  1-forms $\alpha_j$, $j=1, \cdots,  \mu_f$ and each 1-form $\alpha_j$ can be defined by the condition 
\[d \alpha_j=m_j(x,y) dx\wedge dy,\]
where $m_j(x,y)$ is an element of the monomial basis (\ref{mk}) for $\mM(*D)$. Furthermore, a rational  1-form $\alpha$ in $\Omega^1(*D)$ can be written as follows
\[
\alpha=\sum_{j=1}^\mu {\mathcal C}_j(f)\alpha_j+df\wedge\zeta_1+d\zeta_2
\]
where $\zeta_1,\ \zeta_2\in\Omega^0(*D)$ and $\mC_j$ is a polynomial with degree that admits an evaluation,
\begin{equation}
deg (\mC_j)  \leq \frac{deg(Z(\alpha))+ deg(Q)-deg(Z(\alpha_j))-1}{q deg(P)} = \frac{deg(Z(\alpha))+n-deg(Z(\alpha_j))-1}{mq}.
\label{degak}
\end{equation}
Here $Z(\eta)$ is zero divisor of $\eta$.

\end{theo}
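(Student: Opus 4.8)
The plan is to exploit the isomorphism $\Omega_{\mathbb{P}^2/\mathbb{P}^1}^2(*D)|_{\C^2}\cong\mM(*D)$ together with Proposition \ref{globalMilnor}, which already gives us the vector–space dimension $\mu_f=(n+m-1)^2-nm$ and an explicit monomial basis $\{m_j(X,Y)\}_{j=1}^{\mu_f}$. First I would set up the Gauss--Manin / Brieskorn formalism in the affine chart: writing $f=P^q/Q^p$ and introducing $\zeta$ with $\zeta Q=1$, $d\zeta=-\zeta^2dQ$, so that every element of $\Omega^1(*D)$ becomes a polynomial $1$-form in $X,Y,\zeta$ modulo the ideal $I$ of \eqref{15:391}. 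The key structural input is that $d$ induces a map $\Omega^1(*D)\to\Omega^2(*D)$ whose target, modulo $df\wedge\Omega^1(*D)$, is the finite-dimensional module $\mM(*D)$. Since $H_f=\Omega^1(*D)/(df\wedge\Omega^0(*D)+d\Omega^0(*D))$ and the denominator is exactly the space of relatively exact forms (the forms whose differential is $df\wedge(\text{something})$, up to $d$ of a function), the map $\alpha\mapsto d\alpha$ descends to an injection $H_f/(\text{torsion})\hookrightarrow \Omega^2_{\mathbb{P}^2/\mathbb{P}^1}(*D)$; one checks that the $1$-forms $\alpha_j$ defined by $d\alpha_j=m_j\,dx\wedge dy$ map to the monomial basis, hence they are $\mathbb{C}[t]$-linearly independent and generate $H_f$ over $\mathbb{C}[t]$.

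Next I would establish freeness. For a rational function of this type the generic fiber is a smooth affine curve and the Milnor number is concentrated at the finitely many affine critical points (Condition \ref{cond}(2)); by the general theory of Brieskorn modules for tame/cohomologically-tame functions (as in \cite{Mo6}, and the smoothness/transversality hypotheses of Condition \ref{cond}(1)), $H_f$ has no $\mathbb{C}[t]$-torsion, so a generating set of cardinality $\mu_f=\mathrm{rank}\,H_1(f^{-1}(b),\mathbb{Z})$ that is linearly independent over $\mathbb{C}(t)$ is automatically a free basis over $\mathbb{C}[t]$. This is where I would invoke \cite[Theorem 10.12.1]{Mo6} in the form adapted to $f=P^q/Q^p$: the monomial basis of $\mM(*D)$ lifts to a free $\mathbb{C}[t]$-basis $\{\alpha_j\}$ of $H_f$, and any $\alpha\in\Omega^1(*D)$ decomposes as $\alpha=\sum_j\mC_j(f)\alpha_j+df\wedge\zeta_1+d\zeta_2$.

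The remaining, and genuinely the hardest, part is the degree bound \eqref{degak}. Here I would argue by a pole-order / weighted-degree estimate. The relation $d\alpha\equiv\sum_j\mC_j(f)m_j\,dx\wedge dy$ in $\mM(*D)$ controls everything: the left-hand side has pole order along $D=V(Q)$ governed by $\deg Z(\alpha)$ and the number of times one differentiates $Q^{-1}$, while each term $\mC_j(f)m_j\,dx\wedge dy=\mC_j(P^q/Q^p)\,m_j\,dx\wedge dy$ carries a pole of order $p\deg\mC_j$ from the denominator $Q^{p\deg\mC_j}$ and a numerator of degree $q\deg(P)\deg\mC_j+\deg Z(\alpha_j)$. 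Matching the homogeneous degree of the numerator of the highest-pole-order part on both sides — using that $\deg P=m$, $\deg Q=n$, and that $m q=n p$ so the "level" variable $t=P^q/Q^p$ has a well-defined weighted degree $mq$ — yields $mq\deg\mC_j+\deg Z(\alpha_j)\le \deg Z(\alpha)+n-1$, which is exactly \eqref{degak}. I expect the care needed here to be in tracking how the ideal relations in \eqref{15:391} (which replace $qP_X$ by $p\zeta Q_X P$, etc.) interact with the pole filtration, so that the reduction to the monomial basis does not secretly raise the pole order; making the estimate uniform in $j$ and tight is the main obstacle, and I would handle it by choosing the monomial representatives $m_j$ of minimal degree (as in Figure \ref{mbasis}, where $\deg m_j\le m+n-2$) and running the degree count on the associated graded of the pole filtration.
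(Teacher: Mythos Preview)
Your structural step contains a genuine error. You claim that $d:\alpha\mapsto d\alpha$ descends to an injection $H_f/(\text{torsion})\hookrightarrow \Omega^2_{\mathbb{P}^2/\mathbb{P}^1}(*D)=\mM(*D)$. But by Proposition~\ref{globalMilnor} the target is a $\mathbb{C}$-vector space of dimension $\mu_f$, hence a \emph{torsion} $\mathbb{C}[t]$-module, whereas the module you are trying to prove free of rank $\mu_f$ is infinite-dimensional over $\mathbb{C}$; no such injection can exist. Even as a $\mathbb{C}[t]$-linear map (which it is, since $d(f\alpha)\equiv f\,d\alpha$ mod $df\wedge\Omega^1$), knowing that the $\alpha_j$ hit a $\mathbb{C}$-basis of $\mM(*D)$ gives neither $\mathbb{C}[t]$-linear independence nor $\mathbb{C}[t]$-generation: a relation $\sum_j \mC_j(f)\alpha_j=0$ in $H_f$ only yields $\sum_j \mC_j(f)m_j=0$ in $\mM(*D)$, and since $f\cdot m_j$ is itself a $\mathbb{C}$-linear combination of the $m_k$ this does not force $\mC_j=0$. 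Invoking \cite[Theorem~10.12.1]{Mo6} at this point is circular, since that is precisely the statement being proved here for $f=P^q/Q^p$.

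The paper's route is entirely different and transcendental rather than algebraic: it builds the period matrix ${\bf Y}(t)=\bigl(\int_{\delta_i(t)}\alpha_j\bigr)$, proves (Lemma~\ref{14.01}) that its determinant $W(t)$ equals $c\,\Delta(t)$ with $\Delta$ the discriminant polynomial, and then for a given $\alpha$ solves ${\bf Y}(t)\,{\bf \mC}(t)={\bf A}_\delta(t)$ by Cramer's rule. Polynomiality of the $\mC_j$ follows because the numerator determinants vanish at least to the order of $\Delta$ at each critical value and have polynomial growth at infinity; linear independence of the $\alpha_j$ comes from the nonvanishing of $\det\bigl(\int_{\delta_i}d\alpha_j/df\bigr)$ via \cite[Theorem~5.25~b]{ZO}. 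Your degree bound argument, on the other hand, is essentially what the paper does: differentiate $\sum_j \mC_j(f)\alpha_j$, compare the numerator degree of $d\alpha$ (namely $\deg Z(\alpha)+n-1$) with that of $\mC_j(f)\,d\alpha_j$ and of $\mC_j'(f)\,df\wedge\alpha_j$, and read off $mq\deg\mC_j+\deg Z(\alpha_j)\le \deg Z(\alpha)+n-1$. So the last third of your plan is on target; it is the freeness and generation step that needs to be replaced by the period-matrix argument.
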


In order to give a proof of this theorem, we introduce period  integrals associated to the rational function $f$ as in \eqref{PQfirstintegral}.

Let $\delta_i(t)$ , $i=1, \cdots, \mu_f$ be a continuous family of vanishing cycles around the critical points of $f$ which form a basis of $H_1(f^{-1}(t),\mathbb{Z})$ for any regular value $t\in\mathbb{C}$. 
Suppose that the 1-forms $\alpha_1,\cdots,\alpha_{\mu_f} \in H_f$
  such that $d\alpha_j=m_{j}dx\wedge dy$ where $m_{j}$ is an element of the basis (\ref{mk}) for $\mM(*D).$
With these homology and cohomology bases one can associate the \emph{period matrix } 
\begin{equation}\label{Yt}
{\bf Y}(t):=
\left(\begin{array}{lll}
\int_{\delta_1(t)}\alpha_1&\cdots&\int_{\delta_\mu(t)}\alpha_1\\
\vdots&\ddots&\vdots\\
\int_{\delta_1(t)}\alpha_\mu&\cdots&\int_{\delta_\mu(t)}\alpha_\mu
\end{array}\right)
\end{equation}
which is an analytic multi-valued matrix-function ramified over the critical values $C_f$ of $f$.
Also, $W(t):=det({\bf Y}(t))$ is called  Wronskian function. 
\begin{prop}
For $f$ under the Conditions \ref{cond},
 the determinant $W(t)$ of any period matrix is a polynomial in $t$ with zeros at $C_f.$ This fact holds regardless of the choice of the forms which constitute a  basis of $H_f.$
\end{prop}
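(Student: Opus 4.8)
The plan is to establish that $W(t)$ is polynomial by controlling its behavior at the two kinds of special points: the critical values $c \in C_f$ (where the cycles $\delta_i(t)$ may ramify) and the point $t = \infty$ (where the fiber degenerates). First I would observe that $W(t)$ is \emph{a priori} a holomorphic, possibly multi-valued function on $\mathbb{C} \setminus C_f$. The multi-valuedness is governed by the monodromy representation $\sM_f$; since a change of basis of $H_1(f^{-1}(t),\mathbb{Z})$ along a loop multiplies the period matrix $\mathbf{Y}(t)$ on the right by an integer matrix of determinant $\pm 1$ (the monodromy matrices lie in $SL$ or at worst $GL_{\mu_f}(\mathbb{Z})$), the determinant $W(t)$ is multiplied by $\pm 1$. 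Hence $W(t)^2$ is single-valued and holomorphic on $\mathbb{C} \setminus C_f$.

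Next I would show that at each critical value $c \in C_f$ the function $W(t)$ extends holomorphically, indeed with a zero. The local Picard--Lefschetz picture gives, near $c$, a decomposition of $H_1$ into a vanishing cycle $\delta$ (on which the monodromy acts by a transvection) and an invariant complement; the period $\int_{\delta(t)}\alpha_j$ of a holomorphic form over the vanishing cycle tends to $0$ as $t \to c$ (the cycle shrinks and the integrand stays bounded), while the other periods, together with the finite-growth estimates already invoked in the excerpt (the Cramer's-rule argument around \eqref{23:28}), remain bounded. Expanding $\det \mathbf{Y}(t)$ along the column corresponding to $\delta$ shows $W(t) \to 0$ as $t \to c$; combined with single-valuedness of $W^2$ and the removable-singularity theorem, $W$ (or $W^2$) extends holomorphically across $c$ and vanishes there. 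So $W$ is an entire function on $\mathbb{C}$ vanishing on $C_f$.

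Finally I would bound the growth at $t = \infty$ to conclude $W$ is a polynomial. Here I would use the degree estimate \eqref{degak} of Theorem \ref{14:06}: the forms $\alpha_j$ have controlled pole order along $D = V(Q)$, so each period integral $\int_{\delta_i(t)}\alpha_j$ has at most polynomial growth in $t$ as $t \to \infty$ — this is the standard estimate that the Brieskorn module being finitely generated over $\mathbb{C}[t]$ forces the periods, and hence their determinant, to grow at most polynomially (equivalently, the Gauss--Manin connection has a regular singularity at infinity with the appropriate pole order). Therefore $W(t)$ is an entire function of polynomial growth, hence a polynomial, vanishing on $C_f$. The fact that this is independent of the chosen basis of $H_f$ is immediate: two bases differ by an invertible matrix over $\mathbb{C}[t]$ — since $H_f$ is free over $\mathbb{C}[t]$ by Theorem \ref{14:06} — so the two Wronskians differ by multiplication by a unit of $\mathbb{C}[t]$, i.e. a nonzero constant, which changes neither the polynomiality nor the zero set.

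The main obstacle I expect is the growth estimate at infinity: making precise that the pole-order bound along $D$ translates into a uniform polynomial bound on $|\int_{\delta_i(t)}\alpha_j|$ as $t \to \infty$ requires either invoking the regularity of the Gauss--Manin connection for $f$ (which follows from $H_f$ being finitely generated over $\mathbb{C}[t]$, as supplied by Theorem \ref{14:06} and Proposition \ref{globalMilnor}) or a direct deformation-retraction argument on the family of fibers near infinity; the Picard--Lefschetz analysis at the finite critical values, by contrast, is routine once Conditions \ref{cond} guarantee that all critical points are nondegenerate with distinct critical values.
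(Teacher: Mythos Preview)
Your proposal follows essentially the same three-step strategy as the paper: single-valuedness of $W$ via the monodromy action, polynomial growth at infinity, and vanishing at each point of $C_f$ because a vanishing cycle shrinks to a point. One refinement: under Conditions \ref{cond} the local monodromies around points of $C_f$ are Picard--Lefschetz transvections, which have determinant exactly $1$, so $W(t)$ itself is already single-valued and there is no need to pass to $W(t)^2$.

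The one genuine issue is a circularity. You invoke Theorem \ref{14:06} (in particular the estimate \eqref{degak}) to control the growth of the period integrals at $t=\infty$, and again the freeness of $H_f$ over $\mathbb{C}[t]$ to argue that two bases differ by a matrix with unit determinant. But in the paper the proof of Theorem \ref{14:06} goes through Lemma \ref{14.01}, and Lemma \ref{14.01} in turn relies on the present Proposition: it needs $W(t)$ to already be a polynomial in order to compare it with $\Delta(t)$. So this Proposition must be established \emph{before} Theorem \ref{14:06}, not by appeal to it. The paper handles the growth at infinity by simply asserting that the entries of $\mathbf{Y}(t)$ have at most polynomial growth in any sector, which is a standard fact (regularity of the Gauss--Manin connection, or a direct estimate using that the $\alpha_j$ can be taken to be polynomial $1$-forms). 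Your own parenthetical alternatives---regularity of Gauss--Manin, or a direct geometric argument on the fibers near infinity---are exactly the right route; make one of them the primary argument and drop the forward reference to \eqref{degak}. Likewise, the basis-independence follows because none of the three steps uses the specific choice of $\alpha_j$, so there is no need to appeal to the $\mathbb{C}[t]$-freeness of $H_f$ at this stage.
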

\begin{proof}
From the Picard-Lefschetz theorem we see that the monodromy action around each $t = t_i \in C_f$ induces
a monodromy transformation $T_i {\bf Y}(t)$ on ${\bf Y}(t)$ (\ref{Yt}) and $det (T_i {\bf Y}(t)) = det ({\bf Y}(t)). $ This means that
the determinant  $W(t)$ is a single-valued function on $\mathbb{C}$.
As $t$ tends to infinity the integrals occurring in the entries of ${\bf Y}(t)$ grow no faster than a  polynomial in $|t|$ in any sector. This implies that $W(t)$ is a polynomial. When $t$ tends to a point $c\in C_f$, at least one of vanishing cycles $\delta_i(t)$ vanishes, hence one row of ${\bf Y}(t)$ is zero and the determinant of $W(t)$ tends to zero. 
\end{proof}

Let us define the function 
\begin{equation}
\Delta(t)=(t-t_1)^{\mu_1}(t-t_2)^{\mu_2}\cdots(t-t_s)^{\mu_s}
\end{equation}
where $\mu_i$ is the summation of  local Milnor numbers of  critical points  located on the fiber $f^{-1}(t_i)$ and $\sum_{i=1}^s\mu_i=\mu$. When the $f$ is a polynomial $\Delta$ is called determinant function of $f$ (see e.g. \cite{L.G}).
\begin{lem}\label{14.01}
There exists a non-zero constant $c$ such that $W(t)=c\Delta(t)$.
\end{lem}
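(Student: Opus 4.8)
The plan is to compare the two polynomials $W(t)$ and $\Delta(t)$ by matching their divisors on $\mathbb{C}$ and then their degrees. From the preceding proposition we already know $W(t)$ is a polynomial whose zero set is contained in $C_f = \{t_1,\dots,t_s\}$, so the first step is to pin down the multiplicity of $W$ at each critical value $t_i$. Near $t=t_i$ I would choose a distinguished basis of vanishing cycles adapted to the critical points on $f^{-1}(t_i)$: for each local critical point $p$ of Milnor number $1$ (all critical points are nondegenerate under Conditions \ref{cond}, (2)) there is a cycle $\delta$ vanishing as $t\to t_i$, and by the local Brieskorn theory the corresponding period $\int_{\delta(t)}\alpha_j$ behaves like $(t-t_i)$ times a holomorphic function (for a Morse singularity in two variables the vanishing period of a holomorphic $2$-form integrated over the Lefschetz thimble vanishes to first order; equivalently the relative cohomology class of $dx\wedge dy$ is generated near $p$ and the period has a simple zero). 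Hence each such row/column contributes one factor $(t-t_i)$ to $\det \mathbf{Y}(t)$, giving $(t-t_i)^{\mu_i}\mid W(t)$ where $\mu_i$ is the number of critical points on that fiber — exactly the exponent appearing in $\Delta(t)$.

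The second step is to show there are no further zeros and no zero at infinity beyond what $\Delta$ records, which amounts to a degree count. The total order of vanishing of $W$ over all of $C_f$ is $\sum_i \mu_i = \mu = \mu_f$, so $\Delta(t)\mid W(t)$ and $W(t)/\Delta(t)$ is a polynomial with no zeros in $\mathbb{C}$, hence a nonzero constant times $t^k$ for some $k\ge 0$. To see $k=0$ I would invoke the growth estimate already used in the previous proposition: in any sector the entries of $\mathbf{Y}(t)$ grow at most polynomially, and more precisely $\deg W \le \deg \Delta = \mu$ follows from the fact that the $\mathbb{C}[t]$-module $H_f$ is free of rank $\mu$ with a basis of $1$-forms $\alpha_j$ whose periods have controlled degree growth (Theorem \ref{14:06} bounds the degrees of the coefficients $\mathcal{C}_j(t)$, which forces the asymptotic order of the period matrix). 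Comparing $\deg W \le \mu = \deg \Delta$ with $\Delta \mid W$ forces $W = c\,\Delta$ with $c$ a nonzero constant.

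The main obstacle is the first step: establishing that the period vanishes to order exactly one (not higher) at each critical value, uniformly over the chosen basis. This requires the nondegeneracy of the critical points together with the observation that the $2$-forms $m_j\,dx\wedge dy$ span the relative cohomology $\mathcal{M}(*D)$, so that for each vanishing cycle at least one $\alpha_j$ pairs nontrivially with it at first order; this is what prevents the determinant from vanishing to higher order and guarantees the exponent is precisely $\mu_i$. Genericity (Conditions \ref{cond}) is used here to ensure all critical points are Morse and the critical values are distinct in the sense that the local contributions add up cleanly. Once this local analysis is in place, the rest is the divisor-plus-degree bookkeeping sketched above, and the constant $c$ is left undetermined, as the statement permits.
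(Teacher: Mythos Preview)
Your argument contains a genuine circularity. In the second step you bound $\deg W$ by invoking Theorem~\ref{14:06}, but in this paper Lemma~\ref{14.01} is a \emph{prerequisite} for Theorem~\ref{14:06}: there, the fact that the coefficients $\mathcal{C}_j(t)$ produced by Cramer's rule are polynomials (rather than rational functions with poles on $C_f$) is deduced precisely from $\det\mathbf{Y}(t)=c\,\Delta(t)$, which is what guarantees that the numerators $\det\mathbf{Y}_{\mathbf{A},j}(t)$ and the denominator $W(t)$ vanish to matching order at each $t_i$. So Theorem~\ref{14:06} is not available to you at this point. To rescue your approach you would need an independent bound $\deg W\le \mu_f$, obtained e.g.\ from a direct asymptotic analysis of the periods $\int_{\delta_i(t)}\alpha_j$ as $t\to\infty$, read off from the degeneration of the fibers of $P^q/Q^p$ along $V(Q)$; this is not hard, but it is not what you wrote.

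The paper proceeds quite differently and avoids both the local multiplicity computation and a sharp a priori degree bound by a deformation argument: it perturbs $f$ to $f_A=(P+a_1X+b_1Y)^q/(Q+a_2X+b_2Y)^p$ so that for generic $A$ all $\mu_f$ critical values become pairwise distinct, compares $\tilde W(A,t)$ with the discriminant $D(A,t)$ of the family to obtain $\tilde W(A,t)=C_A\cdot D(A,t)$ with $C_A$ independent of $t$, and then specializes $A\to 0$. Your Morse--theoretic divisibility step $(t-t_i)^{\mu_i}\mid W(t)$ is essentially correct (each of the $\mu_i$ columns of $\mathbf{Y}(t)$ indexed by a cycle vanishing at $t_i$ is holomorphic and vanishes there), but note that Conditions~\ref{cond}\,(2) only asserts that the critical \emph{points} are distinct, not the critical \emph{values}; the possible confluence of values is exactly what the paper's deformation is designed to undo.
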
  
\begin{proof}
If $t$ tends to the critical value $t_i$ then the vanishing cycles $\delta_i(t)$ correspond to $t_i$ tend  to  points therefore $\int_{\delta_i(t)}\alpha_j$ tends to zero. This implies that the matrix ${\bf Y}(t)$ at point $t_i$ is not of full rank, in other words $t_i$ is a root of Wronskian function $W$. 
If all critical values $C_f$ of $f$ are distinct  then we $W(t)=c\cdot\Delta(t)$ as the cardinality of $C_f$ is $\mu_f$.
 
If not, consider a deformation $$f_{A,t}(X,Y)=\frac{(P+a_1 X+b_1Y)^q}{(Q+a_2 X+b_2 Y)^p}-t$$ where 
$A=\left( a_1, b_1,  a_2, b_2 \right)$.
There is an open subset $U$ of $ (A, t) \in \mathbb{C}^5$ 
 such that the function $f_{A,t}$ has distinct critical values and all the critical points belongs to affine coordinate $\mathbb{C}^2$. Let $\Sigma_{A,t}=\{(A,t)\ |\ D(A,t)=0\}$. As the Milnor number of $f_{a,t}$ is equal to the Milnor number $f$ then $D(A,t)$ is also has degree $\mu$ therefore if $A=0$ then $D(0,t)=\Delta(t)$. 
Let $\{\delta_j(t,A)\}_{j=1}^{\mu_f} 
$ be a continuous family of cycles which forms a basis of $H_1(f_{A,t}^{-1}(0),\mathbb{Z})$ for any $(A,t)\notin\Sigma_{A,t}$. For the polynomial $f_{A,t}$ consider the Wronskian function 
\[
\tilde W(A,t)=det\left( \int_{\delta_i(t,A)}\alpha_j\right).
\]
This function is polynomial in $(A,t) $ and vanishes along $\Sigma_{A,t}$, this implies that $\tilde W(A,t)=C_A\cdot D(A,t)$. The polynomial $C_A$ does not depend on $t$ because $deg(\tilde W(A,t))$ respect to $t$ is equal to $deg(D(A,t))=\mu_f$. To finish the proof it is enough to recall that $ \tilde W(0,t)=W(t)$.
\end{proof}

Now we pass to the proof of Theorem \ref{14:06}. 

\begin{proof}

First of all we see that $\{\alpha_j\}_{j=1}^{\mu_f}$ are linearly independent in $H_f.$  The form $\frac{d\alpha_j}{df}$ of $\alpha_j$ coincides with the Gel'fand-Leray form of \[m_j\frac{dx\wedge dy}{df}.\]
According to  in \cite[Thoerem 5.25, b]{ZO} we have
\begin{equation}
det\left(\int_{\delta_i(t)}\frac{d\alpha_j}{df}\right)=det\left(\frac{d}{dt}\int_{\delta_i(t)}\alpha_j\right) = c=const\neq 0.
\end{equation}
This means the required linear independence. We remark also that this relation can be deduced from a refinement of \cite[Theorem 2.7]{Tan1} established for the Gauss-Manin system satisfied by period integrals $\int_{\delta_i(t)}\alpha_j$.

Let us consider a system of equations for a column vector of  unknown functions $ {\bf \mC} (t) = (\mC_1(t), \cdots, \mC_{\mu_f}(t))$
\begin{equation}
{\bf Y}(t).{\bf \mC} (t) = {\bf A_\delta}(t)
\label{YCDA}
\end{equation}
with $ {\bf Y}(t) $ \eqref{Yt} and a ${\mu_f}$ column vector:
\[ {\bf A_\delta}(t) =
\left(\begin{matrix}
\int_{\delta_1(t)}\alpha\\
\vdots\\
\int_{\delta_\mu(t)}\alpha
\end{matrix}\right).
\]   
Cramer's rule solves \eqref{YCDA} with solutions 
$${ \mC}_j (t) = det {\bf Y}_{{\bf A}, j}(t)/ det {\bf Y}(t) \;\;\; j=1, \cdots, \mu_f $$
where ${\bf Y}_{{\bf A}, j}(t)$ is a $\mu_f \times \mu_f$ matrix obtained by replacing the $j-$th column of ${\bf Y}(t)$ with  ${\bf A_\delta}(t)$.  The function $ det {\bf Y}_{{\bf A}, j}(t)$  has at most polynomial growth as $|t| \rightarrow \infty$ or as $t$ approaches to  a zero of $\Delta(t)$. By Lemma \ref{14.01}  $  \forall j, { \mC}_j (t) \in \mathbb{C}[t]$ as ${\bf Y}_{{\bf A}, j}(t)$  vanishes at zeros of $\Delta(t)$.

In other words, for a fixed $i$ we have 
\[\int_{\delta_i(t)}\alpha=\sum_{j=1}^\mu \mC_j(t)\int_{\delta_i(t)}\alpha_j.\]
This implies that 
\begin{equation}\label{23:28}
\alpha=\sum_{j=1}^\mu \mC_j(f)\alpha_j
\end{equation}
in the $\mathbb{C}[t]$-module $H_f$. Assume that $\alpha= Q^{-l} (P^1_\alpha dx+  P^2_\alpha dy)$ has poles of order $l$ along $V(Q)$ therefore $d\alpha={h}{Q^{-l-1}} dx \wedge dy$ where $h$ is a polynomial of degree $deg(Z(\alpha))+n-1$. 
 The derivation of each term of the right side of \eqref{23:28} becomes
$$ d(\mC_j(f)\alpha_j)  = \mC_j'(f) f \frac{( qQdP - pPdQ) \wedge \alpha_j}{PQ} + \mC_j(f) d \alpha_j.$$ 
Each term of the above expression has degree $( deg (\mC_j) -1) mq + (mq -1) + (n+m-1) +deg  Z (\alpha_j)$ and 
$mq .deg (\mC_j) +deg  Z (\alpha_j) -1$ respectively. We remark here that
$$ deg Z(( qQdP - pPdQ) \wedge (x dy - y dx)) =  n+m-1.$$

By Condition \ref{cond}, (3)  $ m \geq n \geq 2$ and  we conclude that 
\begin{equation} \label{17:06}
deg(Z(\alpha))+n-1\geq mq.deg(\mC_j)+deg(Z(\alpha_j)).
\end{equation}
\end{proof}

\smallskip
\begin{flushleft}
\begin{minipage}[t]{6.2cm}
  \begin{center}
{\footnotesize
{Y. Zare , S. Tanab\'e}\\
Department of Mathematics,\\
Galatasaray University,\\
\c{C}{\i}ra$\rm\breve{g}$an cad. 36,\\
Be\c{s}ikta\c{s}, Istanbul, 34357, Turkey.\\
{\it E-mails}:   {yadollah2806@gmail.com, yzare@gsu.edu.tr}\\
tanabe@gsu.edu.tr}
\end{center}
\end{minipage}
\end{flushleft}

\end{document}